\theoremstyle{plain}
\newtheorem{thm}{Theorem}
\newtheorem{lem}{Lemma}
\newtheorem{cor}{Corollary}
\newtheorem{prop}{Proposition}
\newtheorem*{thm*}{Theorem}
\theoremstyle{definition}
\newtheorem{defi}{Definition}
\newtheorem{rema}{Remark}
\newtheorem*{ques*}{Question}
\newcommand{\laur}{\left[t,t^{-1} \right]}
\newcommand{\polyp}{\left[ t \right]}
\newcommand{\polym}{\left[ t^{-1} \right]}
\newcommand{\Sub}{\textnormal{Sub}}
\newcommand{\PPU}{\textnormal{PPU}}
\newcommand{\PU}{\textnormal{PU}}
\begin{document}

\title{A right-invariant lattice-order on groups of paraunitary matrices}
\author{Carsten Dietzel}
\email{carstendietzel@gmx.de}
\address{Institute of algebra and number theory, University of Stuttgart, Pfaffenwaldring 57, 70569 Stuttgart, Germany}
\date{\today}

\begin{abstract}
In \cite{rump_goml}, Rump defined and characterized noncommutative universal groups $G(X)$ for generalized orthomodular lattices $X$.

We give an explicit description of $G(X)$ in terms of \emph{paraunitary} matrix groups, whenever $X$ is the orthomodular lattice of subspaces of a finite-dimensional $k$-vector space $V$ that is equipped with an anisotropic, symmetric $k$-bilinear form.
\end{abstract}

\maketitle

\section*{Introduction}

An \emph{orthomodular lattice}, \emph{OML} for short, is a bounded lattice with an order-reversing involution $\ast:X \to X$ such that, additional to the common lattice axioms, the following axioms hold:
\begin{align}
x \wedge x^{\ast} & = 0 \tag{OL1} \label{eq:oc1} \\
x \vee x^{\ast} & = 1 \tag{OL2} \label{eq:oc2} \\
x \leq y \implies & x \vee (x^{\ast} \wedge y) = y. \label{eq:ortho} \tag{OML} 
\end{align}
Axiom \eqref{eq:ortho} goes back to Husimi (\cite{Husimi})
 complementing the axioms \eqref{eq:oc1}, \eqref{eq:oc2} which were part of a series of investigations by Birkhoff, von Neumann et al. (\cite{neumann_algebraic_generalization,Birkhoff_von_Neumann}) 
with the goal of an algebraization of quantum mechanics, aiming at developing an axiomatic framework for \emph{quantum logic} that is independent of the notion of Hilbert space or, more generally, operator algebras.

A prominent class of OMLs is given by the lattices of closed subspaces of a Hilbert space which in general not satisfy the modular law (see \cite[p.832]{Birkhoff_von_Neumann}) but the orthomodular law, Equation \eqref{eq:ortho}.

Equivalently, one has an orthomodular lattice structure on the respective sets of projection operators. More generally, the projection lattices of von Neumann algebras carry a natural OML structure - this was already pointed out in \cite{Husimi}.

The lattice-theoretical notions of join and meet can be transferred from subspaces to projections without further complications. However, composing projections generally does not lead to projections. Therefore, compositions can not be given sense in the \emph{mere} framework of OMLs, at least not in form of a \emph{total} operation.

Two ways out of this problem can be condensed as follows:

\begin{itemize}
\item The algebra of projections is a \emph{partial} one, i.e. only projections which are \glqq compatible\grqq\ in some sense can be composed.
\item Projections are \emph{embedded} - generally as a proper subset - in a total algebra, i.e. the composition may not be a projection any more but an element of the surrounding algebra.
\end{itemize}

Both approaches can be found in the work of Bennet and Foulis (\cite{Foulis_effect}) where partial algebras of \emph{effects} are axiomatized. Foulis' effect algebras contain the OMLs as a proper subclass.

These effect algebras can be embedded in a \emph{universal} commutative group, where universality is to be understood in the categorical sense.

Here some explanations might be appropriate. We use the following example:

Let $\mathcal{H}$ be a Hilbert space and denote by $\pi_U: \mathcal{H} \to \mathcal{H}$ the projection onto the closed subspace $U$. Denote by $X$ the set of all such projections.

We call two projections $\pi_U, \pi_W$ \emph{compatible} if their kernels (i.e. the orthogonal complements $U^{\ast},W^{\ast}$) are orthogonal. For compatible projections, we set $\pi_U \cdot \pi_W := \pi_{U \cap W}$. In general, on an OML $X$, this means that $x \cdot y := x \wedge y$ whenever $x^{\ast} \vee y = 1$.

An \emph{additive group-valued measure} on $X$ is given by a mapping $\varphi: X \to G$ where $G$ is an additive group and $\varphi(\pi_U \cdot \pi_W) = \varphi(\pi_U) + \varphi(\pi_W)$ whenever $\pi_U$ and $\pi_W$ are compatible.

A universal commutative group $A(X)$ is a commutative group with a distinguished additive group-valued measure $\mu: X \to A(X)$ such that every additive group-valued measure $\varphi:X \to G$ can uniquely be continued to a group homomorphism $\overline{\varphi}: A(X) \to G$ such that $\varphi = \mu \circ \overline{\varphi}$.

It is clear that such a group exists and is unique. Bennet and Foulis, however, proved the non-trivial fact that $A(X)$ carries a partial order whose positive cone consists of the submonoid generated by $\mu(X) \subseteq A(X)$.

Using algebraic systems called \emph{L-Algebras}, Rump (\cite{Rump_von_Neumann},\cite{rump_goml}) investigated the more general case of non-commutative group-valued measures which are defined analogously without regard to $G$ being commutative.

He showed that each OML $X$ \emph{embeds} in the respective (non-commutative) universal group $G(X)$ - its \emph{structure group} - and that $X$ generates a submonoid $S(X)$ that is the negative cone of a right-invariant order on $G(X)$. Amazingly, this order happens to turn $G(X)$ into a lattice. So, $G(X)$ is a \emph{right $\ell$-group}.

Furthermore $G(X)$ turns out to be a quasi-Garside group (see \cite[Section I,2]{Garside_Foundations}): the element $0 \in X \subseteq G(X)$ is a Garside element in the following sense:
\begin{itemize}
\item The right- and left-divisors of $0$ in $S(X)$ coincide - they are given by the set $X$,
\item this set generates $G(X)$,
\item $G(X)$ is a group of \emph{left fractions} of $S(X)$, i.e. every $g \in G(X)$ can be written as $x^{-1}y$ with $x,y \in S(X)$.
\end{itemize}

The aim of this paper is to give an explicit realization of $G(X)$ whenever $X$ is the OML of subspaces of a $k$-vector space $V$ with respect to an anisotropic symmetric $k$-bilinear form. This includes all finite-dimensional real Hilbert spaces.

We identify $G(X)$ with a subgroup of the so-called \emph{paraunitary group} $\PU(b)$. Writing an element $v \in V\laur$ in the form
\[
v = \sum_{i = -\infty}^{\infty} t^i v_i
\]
with almost all $v_i \in V$ being zero, $\PU(b)$ can be defined as the group of all $k\laur$-linear automorphisms of $V\laur := k \laur \otimes_k V$ that preserve the hermitean form $\tilde{b}$ on $V \laur$ given by
\[
\tilde{b}(v,w) = \sum_{n = -\infty}^{\infty}t^n \left( \sum_{i=-\infty}^{\infty} b(v_{i-n},w_i) \right).
\]
Mapping $t$ to $1$ gives a well-defined group-homomorphism $\PU(b) \to U(b)$ - the group of $k$-automorphisms of $V$ that preserve the bilinear form $b$. Its kernel $\PPU(b)$ - the \emph{pure paraunitary group} - will be shown to be isomorphic to $G(X)$.

We note here that paraunitarity plays an important role in signal processing (\cite[Chapter 14]{Paraunitary2}). More generally, automorphism groups of several hermitean bilinear forms over Laurent rings occur frequently in the representation theory of braid groups - for example as ranges of the Burau (\cite{Squier_Burau}) or the Lawrence-Krammer representation (\cite{Song_LK}, \cite{Budney_LK}).

The negative cone $S(X)$ is then represented by the submonoid $\PPU(b)^-$ consisting of all elements of $\PPU(b)$ that map the subset $V\polym := k\polym \otimes_k V$ into itself.

We give an outline of the paper:
\begin{itemize}
\item In Section 1, we collect some basic facts and definitions regarding OMLs and their structure groups. In particular we give an account of Rump's results that characterize the structure groups of OMLs amongst right $\ell$-groups.
\item In Section 2, we construct the paraunitary group $\PU(b)$ and its subgroup $\PPU(b)$. Furthermore, we show that there exists a semidirect decomposition $PPU(b) \rtimes U(b)$. 
\item In Section 3, we prove that $\PPU(b)$ is right lattice-ordered by the negative cone $\PPU(b)^-$.

This will be shown as follows: $\PPU(b)^-$ has a natural representation on the $k\polym$-module $V^{\oplus}:= V\laur / V \polym$. Equipping $\PPU(b)^-$ with the right-divisi\-bility order and denoting by $\Sub(V^{\oplus})$ the lattice of \emph{finitely generated} $k \polym$-submodules of $V^{\oplus}$ we construct a map
\begin{align*}
\Omega: \PPU(b)^- & \to \Sub(V^{\oplus}) \\
\varphi & \mapsto \ker(v \mapsto \varphi v)
\end{align*}
that is shown to be an isomorphism of posets, thus showing that $\PPU(b)^-$ is a lattice under right-divisibility.

This bijection will be extended to an order-isomorphism between $\PPU(b)$, ordered by the negative cone $\PPU(b)^-$, and the lattice of finitely generated $k\polym$-submodules of $V\laur$ containing $t^m V \polym$ for some $m \in \mathbb{Z}$.

\item Finally, in Section 4, we use Rump's results to prove that $\PPU(b)$ is indeed the structure group of $X(b)$, the OML of all $k$-subspaces of $V$ under the bilinear form $b$. More precisely, we prove that $X(b)$ is isomorphic to the interval $\left[t^{-1},1 \right]$.

\item In the last section, we draw some conclusions:

In the first subsection we argue that $\PPU(b)$ might be seen as a kind of braid group for $U(b)$. This suggests that in the special case of $b$ being the canonical inner product on $\mathbb{R}^n$ the respective pure paraunitary group could be seen as a continuous braid group. We flesh that out by pointing to the quasi-Garside structure on $\PPU(b)$ and to relations in the generating set $X(b)$ reminiscent of Hecke algebras.

In another subsection, using further results of Rump, we will give an application to the factorization theory of paraunitary matrices. 

\end{itemize}

\section{Structure groups of OMLs}

Let $X$ be a bounded lattice with its lowest and greatest element denoted by $0$ resp. $1$.

We say that $X$ is \emph{orthocomplemented} if there is an order-reversing involution $\ast:X \to X$ sending $x$ to $x^{\ast}$ such that
\begin{align*}
x \wedge x^{\ast} & = 0 \tag{OL1} \\
x \vee x^{\ast} & = 1 \tag{OL2}
\end{align*}
hold.

If furthermore the \emph{orthomodular law} \nocite{Kalmbach_ortho}
\begin{equation} \label{eq:orthomodular_law}
x \leq y \Rightarrow x \vee (x^{\ast} \wedge y) = y \tag{OML}
\end{equation}
holds, we call $X$ an \emph{orthomodular lattice} or, short, an \emph{OML}.

The binary relations $\bot$ and $\top$ on an OML $X$ are given by:
\begin{align*}
x \bot y & \Leftrightarrow y \leq x^{\ast} \tag{$\bot$} \\
x \top y & \Leftrightarrow y \geq x^{\ast} \footnotemark \tag{$\top$}
\end{align*}
\footnotetext{This notation is borrowed from \cite{rump_goml} where it is used in a more general context.}
It can easily be seen that $\top$ and $\bot$ are symmetric and that $x \top y \Leftrightarrow x^{\ast} \bot y^{\ast}$.

Recall that a set $X$ with a partial binary operation $\cdot: X \times X \rightharpoonup X$ is called a \emph{partial semigroup} if \emph{weak associativity} holds, i.e. $x\cdot (y \cdot z)$ is defined iff $(x \cdot y) \cdot z$ is defined, and in this case $x\cdot (y \cdot z) = (x\cdot y) \cdot z$.

\begin{lem} \label{lem:oml_partial_operation}
Each OML $X$ can be regarded as a partial semigroup by setting $x \oplus y := x \vee y$ whenever $x \bot y$.

The same holds for $x \sqcap y := x \wedge y$ whenever $x \top y$.
\end{lem}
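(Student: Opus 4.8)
The plan is to verify weak associativity directly, treating the two assertions by duality. For the operation $\oplus$, I would first observe that the \emph{value}, whenever it is defined, is forced: by associativity of the join, both $x \oplus (y \oplus z)$ and $(x \oplus y) \oplus z$ equal $x \vee y \vee z$. Hence the only genuine content of weak associativity is to show that the left-hand side is defined exactly when the right-hand side is.

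To this end I would reduce both definedness conditions to a single symmetric condition. For $x \oplus (y \oplus z)$ to be defined we need $y \bot z$ together with $x \bot (y \vee z)$; unfolding $\bot$, this reads $z \leq y^{\ast}$ and $y \vee z \leq x^{\ast}$. Since $\vee$ is the least upper bound, $y \vee z \leq x^{\ast}$ is equivalent to the conjunction $y \leq x^{\ast}$ and $z \leq x^{\ast}$, so the condition becomes precisely that $x,y,z$ are pairwise orthogonal. For $(x \oplus y) \oplus z$ to be defined we need $x \bot y$ and $(x \vee y) \bot z$, i.e. $y \leq x^{\ast}$ and $z \leq (x \vee y)^{\ast}$. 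Here I would invoke the De Morgan identity $(x \vee y)^{\ast} = x^{\ast} \wedge y^{\ast}$, which holds in every orthocomplemented lattice because $\ast$ is an order-reversing involution; then $z \leq x^{\ast} \wedge y^{\ast}$ splits as $z \leq x^{\ast}$ and $z \leq y^{\ast}$, so again the condition reduces to pairwise orthogonality of $x,y,z$. The two definedness conditions thus coincide, which establishes weak associativity for $\oplus$.

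For $\sqcap$ I would appeal to duality rather than repeat the computation. The map $\ast$ is an order-reversing involution interchanging $\vee$ with $\wedge$, and by the equivalence $x \top y \Leftrightarrow x^{\ast} \bot y^{\ast}$ recorded above it carries $\sqcap$ to $\oplus$: concretely $(x \sqcap y)^{\ast} = x^{\ast} \oplus y^{\ast}$ whenever either side is defined, so weak associativity transfers verbatim, with the common value $x \wedge y \wedge z$ under pairwise $\top$-compatibility. I expect the main obstacle to be minor and purely bookkeeping: it lies only in confirming that the De Morgan law and the least-upper-bound characterization are available. Both hold already at the level of orthocomplemented lattices, so the orthomodular law \eqref{eq:orthomodular_law} in fact plays no role in this lemma, and no real difficulty remains once each definedness condition has been reduced to pairwise orthogonality.
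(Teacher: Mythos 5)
Your proof is correct and follows essentially the same route as the paper's: both verify weak associativity by unfolding the definedness conditions via the order-reversing involution and the De Morgan identity $(x \vee y)^{\ast} = x^{\ast} \wedge y^{\ast}$, your reduction of both sides to pairwise orthogonality being a symmetric repackaging of the paper's two implications. The only cosmetic differences are that you dispatch the $\sqcap$ case by duality through $\ast$ where the paper just repeats the computation, and that you make explicit the (correct, and implicit in the paper) observation that only the orthocomplemented-lattice axioms are used.
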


\begin{proof}
We only have to show that $(x \oplus y) \oplus z$ is defined iff $x \oplus (y \oplus z)$ is. The associative law then follows directly from the associativity of $\vee$.

Therefore, let $x \bot y$ and $z \bot (x \oplus y)$. The latter means that $ z \leq (x \vee y)^{\ast}  \leq y^{\ast}$ (because $\ast$ is order-reversing). Therefore, $z \bot y$.

Similarly, one shows $z \bot x$. We have furthermore assumed that $x \bot y$. It follows that $x \leq (y^{\ast} \wedge z^{\ast}) = (y \vee z)^{\ast}$ resp. $x \leq (y \oplus z)^{\ast}$, thus proving that $x \bot (y \oplus z)$.

The proof for $\sqcap$ runs along the same lines.
\end{proof}

\begin{defi}
Let $(X,\cdot)$ be a partial semigroup. We define the \emph{universal monoid} $S(X)$ by generators and relations, as:
\[
S(X) := \left< X \, \vert \, xy = z \textnormal{ when } x\cdot y = z \right>_{\textnormal{mon}},
\]
Similarly, we define the \emph{universal group} $G(X)$ as
\[
G(X) := \left< X \, \vert \, xy = z \textnormal{ when } x\cdot y = z \right>_{\textnormal{gr}},
\]
\end{defi}

We will not use Rump's definition of the groups $S(X)$ and $G(X)$ but one that is analogous because of \cite[Theorem 7.2]{Rump_von_Neumann}:

\begin{defi}
The \emph{structure monoid} of an OML $X$ is the universal monoid $S(X)$ where $X$ is equipped with the partial operation $\sqcap$ as given in \autoref{lem:oml_partial_operation}.

The \emph{structure group} of an OML $X$ is the universal group $G(X)$ where $X$ is regarded as a partial semigroup in the same sense.
\end{defi}

\begin{defi}
If $G$ be a group which is equipped with a partial order denoted by $\leq$, we say $\leq$ is a \emph{right-invariant order} or, equivalently, $G$ is \emph{right-ordered}, if for all $x,y,z \in G$ we have the implication
\[
x \leq y \Rightarrow xz \leq yz.
\]
If $G$ is a lattice under $\leq$, we say $G$ is a \emph{right $\ell$-group}, or is \emph{right lattice-ordered}.
\end{defi}

\begin{rema}
\begin{enumerate}[1)]
\item Analogously, one might see $G$ as a regular (i.e. faithfully transitive) right action by automorphisms on the partial order of $G$.
\item If $G$ is a right $\ell$-group, the fact that the right-multiplication maps are automorphisms of the lattice structure on $G$ implies the equations
\begin{align*}
(x \vee y)z & = xz \vee yz \\
(x \wedge y)z & = xz \wedge yz
\end{align*}
for all $x,y,z \in G$. These equations could be taken as an alternative definition of a right $\ell$-group.
\end{enumerate}
\end{rema}

\begin{defi}
Let $G$ be equipped with a right order $\leq$. We define the \emph{positive cone} $G^+$ resp. the \emph{negative cone} $G^-$ by
\begin{align*}
G^+ & := \{g \in G: g \geq e \} \\
G^- & := \{g \in G: g \leq e \}
\end{align*}
where $e$ is the neutral element of $G$.
\end{defi}

The following result is well-known:

\begin{lem} \label{lem:cones_define_orders}
Each positive cone $G^+$ (or negative cone $G^-$) of a right order of a group $G$ is a submonoid of $G$ under the group operation such that $g,g^{-1} \in G^+$ (or $g,g^{-1} \in G^-)$ imply $g = e$.

Conversely, every submonoid $C$ of $G$ fulfilling these conditions is the positive (resp. negative) cone of a right-order on $G$ defined by $g \leq h :\Leftrightarrow hg^{-1} \in C$ (resp. $g \leq h :\Leftrightarrow gh^{-1} \in C$).
\end{lem}

Rump showed that the structure group of any OML $X$ can be given a right lattice order in which $X$ embeds. More precisely:

\begin{thm} \cite[Corollary 4.6]{Rump_von_Neumann}
$G(X)$ has a right lattice-order defined by the negative cone $S(X)$. Furthermore, the natural inclusion $\iota: X \hookrightarrow G(X)$ is an embedding of $X$ as an interval of $G(X)$.
\end{thm}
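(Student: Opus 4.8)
My plan is to deduce everything from \autoref{lem:cones_define_orders}, so that the whole task reduces to exhibiting $S(X)$ as a submonoid of $G(X)$ with the two defining properties of a negative cone and then upgrading the resulting order to a lattice order. Writing $\iota_S: S(X) \to G(X)$ for the canonical map, I would proceed in three stages: (i) prove that $\iota_S$ is injective and that every element of $G(X)$ is a left fraction $x^{-1}y$ with $x,y \in S(X)$; (ii) check the positivity condition $S(X) \cap S(X)^{-1} = \{e\}$, which by \autoref{lem:cones_define_orders} already gives a right-invariant order whose negative cone is $S(X)$; (iii) show that this order is a lattice and that $X$ sits inside $G(X)$ as the interval $[0,1]$.

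For stage (i) the natural move is to orient each defining relation as the \emph{length-reducing} rule $xy \to z$, legal exactly when $x \sqcap y = z$ is defined. Termination is then free, and the real work is confluence: the critical pairs arise from triples $x,y,w$ for which $x \sqcap y$ and $y \sqcap w$ are both defined, and resolving every such overlap is tantamount to the partial associativity of $\sqcap$ recorded in \autoref{lem:oml_partial_operation} reinforced by the compatibility of $\sqcap$ with the lattice meet. Establishing this coherence—equivalently, verifying that $(X,\sqcap)$ is a Garside germ—is where the orthomodular law \eqref{eq:orthomodular_law} is genuinely consumed; from it one should read off that $S(X)$ is cancellative and that suitable common multiples exist, so that Ore's criterion produces the group of fractions and identifies it with $G(X)$ realized by left fractions $x^{-1}y$.

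Positivity, stage (ii), I would derive from the Garside element $0 \in X$. The relation $x^{\ast} \sqcap x = x^{\ast} \wedge x = 0$ (valid since $x^{\ast} \top x$) shows that $0 = x^{\ast}x$ in $S(X)$, so $0$ is right-divisible by every generator; atomicity of the resulting Garside monoid then forces the unit group to be trivial, and via the embedding of stage (i) this yields $S(X) \cap S(X)^{-1} = \{e\}$. (In the subspace setting that the paper ultimately treats, positivity is even more transparent: under an anisotropic form one has $U + U^{\perp} = V$, so codimension is additive along $\sqcap$ and furnishes a valuation $\nu$ with $\nu^{-1}(0)=\{e\}$; the general case is what needs the Garside package.) For the lattice property, right-invariance lets me reduce the meet or join of arbitrary $g,h \in G(X)$—after right-translating both into $S(X)$—to a greatest common divisor or least common multiple in $S(X)$ under divisibility, whose existence is again guaranteed by the structure obtained in (i); hence $G(X)$ is a right $\ell$-group.

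Finally, for the embedding of $X$ as the interval $[0,1]$: the top element satisfies $1 \sqcap y = y$ for all $y$, so $1$ is a two-sided identity and $\iota(1)=e$, while the relation $0 = x^{\ast}x$ above gives $0 \le x \le 1$ for every $x \in X$, whence $\iota(X) \subseteq [0,1]$. Monotonicity of $\iota$ is orthomodularity once more: if $x \le y$ in $X$ then, setting $s = x \vee y^{\ast}$, one has $s^{\ast} = x^{\ast}\wedge y \le y$, so $s \top y$, and the orthomodular law \eqref{eq:orthomodular_law} gives $s \wedge y = x$, i.e. $sy = x$ in $S(X)$ and therefore $\iota(x) \le \iota(y)$. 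The order-reflecting property and injectivity follow from the embedding $S(X) \hookrightarrow G(X)$ of stage (i), and a short divisibility argument identifies the elements $g$ with $0 \le g \le 1$ as the right-divisors of $0$, i.e. the simple elements, which are exactly $\iota(X)$; this exhibits $\iota$ as an order-isomorphism onto $[0,1]$. I expect stage (i)—the confluence/Garside-germ verification underpinning cancellativity and the existence of common multiples—to be the main obstacle, since it is precisely there that the orthomodular law must be converted into the combinatorial coherence of the presentation, whereas the remaining stages are comparatively formal.
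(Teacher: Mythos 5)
First, a point of reference: the paper does not prove this statement at all --- it is imported verbatim as \cite[Corollary 4.6]{Rump_von_Neumann}, where Rump derives it through the machinery of $L$-algebras and their structure groups (self-similar closures, cycle-set techniques), not through a presentation-theoretic analysis of $S(X)$. So there is no in-paper proof to match your argument against; what can be judged is whether your outline would stand on its own. It would not, because the step you yourself flag as ``the main obstacle'' is exactly the one your sketch gets wrong. You claim that confluence of the length-reducing system $xy \to z$ (for $x \sqcap y = z$) reduces to the weak associativity of \autoref{lem:oml_partial_operation}. But weak associativity only says that $(x \sqcap y) \sqcap w$ is defined iff $x \sqcap (y \sqcap w)$ is; it says nothing about the critical pair in which $x \sqcap y$ and $y \sqcap w$ are both defined while \emph{neither} triple product is. In that case the word $xyw$ has the two irreducible reducts $(x \sqcap y)\,w$ and $x\,(y \sqcap w)$, and no local confluence argument is available from the axioms you have invoked. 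This is precisely where the orthomodular law \eqref{eq:orthomodular_law} has to be converted into the coherence conditions of a Garside germ (in the sense of \cite[Section VI]{Garside_Foundations}): left-cancellativity of the germ, the $\sqcap$-compatibility needed for the greedy normal form, and the existence of conditional lcms. None of this is established in your proposal; it is named and then assumed. The same assumption silently carries the rest: the existence of gcds and common left-multiples in $S(X)$ (needed for stage (iii) and for Ore's criterion), and the identification of the right-divisors of $0$ with $\iota(X)$ (needed for the interval claim) are all consequences of the germ structure you have not verified, not of anything you have proved.

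The stages that do not depend on this are fine: the identity $x^{\ast} \sqcap x = 0$ and the resulting role of $0$ as a Garside-type element, the additivity-of-codimension remark in the anisotropic case, and the monotonicity computation $s = x \vee y^{\ast}$, $s \sqcap y = x$ via the dual orthomodular law are all correct and are in the spirit of how the paper later uses these facts (compare \autoref{lem:mult_of_p_u} and the singularity argument for $t^{-1}$). But as a proof of the quoted theorem for a general OML, the proposal has a genuine gap at its core: the combinatorial coherence of the presentation of $S(X)$, which is the entire content of Rump's result, is asserted rather than proved. If you want to pursue your route rather than Rump's, the honest formulation is that you must verify that $(X, \sqcap)$ is a left-associative, left-cancellative germ admitting conditional right-lcms and that $0$ is a Garside element for the derived monoid; each of these is a separate lemma requiring \eqref{eq:orthomodular_law}, and none follows from \autoref{lem:oml_partial_operation} alone.
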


Furthermore, there is a nice characterization of those structure groups $G(X)$ in terms of special group elements whose properties we will now define.

There are distinguished properties of elements in a right $\ell$-group which we will now describe (see \cite[Section 4]{Rump_von_Neumann} for their prototypes).

\textbf{Caution} is demanded from readers familiar with the original article for it is more comfortable to demand certain properties from negative instead of positive elements, and vice versa.

\begin{defi}
Let $G$ be a right lattice-ordered group. We say an element $\Delta \in G^-$ is
\begin{enumerate}[i)]
\item \emph{singular}\footnote{As opposed to \cite{Rump_von_Neumann}, we decided to define singularity for \emph{negative} elements in $G$ instead of constantly writing down the inverse of a positive singular element. The reader may convince himself that the meaning of the theorems and definitions is not affected by this redefinition.}, if for $x,y \in G^-$ the implication
\[
\Delta \leq xy \Rightarrow yx = x \wedge y
\]
holds,
\item \emph{normal}, if $\Delta(g \vee h) = \Delta g \vee \Delta h$ holds for all $g,h \in G$ (i.e. \emph{left}-multiplication by $\Delta$ is a lattice-automorphism),
\item an \emph{order unit}\footnote{Here we made the same decision as for singularity.} if for all $g \in G$ there is an $m \in \mathbb{Z}$, such that $g \leq \Delta^m$,
\item a \emph{strong order unit} if $\Delta$ is an order unit that is normal.
\end{enumerate}
\end{defi}

\begin{rema}
A remarkable property of a singular element $\Delta$ is that $\Delta \leq xy$ does not only imply $yx = x \wedge y$ but also $xy = x \wedge y$ (see \cite[Definition 4.8, Remark (i)]{Rump_von_Neumann}.

Therefore $\left[ \Delta, e \right]$ is a \emph{commutative} partial semigroup under the partial operation $x \cdot y = xy$ whenever $xy \in \left[ \Delta, e \right]$.

If we \emph{defined} singularity by the implication $\Delta \leq xy \Rightarrow xy = x \wedge y$ there would be no guarantee that $xy = yx$.
\end{rema}

The following theorem of Rump identifies the right $\ell$-groups that can (and will) arise from an OML:

\begin{thm}\cite[Theorem 4.10]{Rump_von_Neumann} \label{thm:identifying_structure_groups}
If $G$ is a lattice-ordered group with a singular strong order unit $\Delta$, then the interval $[\Delta,e]$ is an OML with the complement $g^{\ast} := g^{-1}\Delta$.

Furthermore, the embedding $[\Delta,e] \hookrightarrow G$ identifies $G$ as a structure group for the OML $[\Delta,e]$.

Vice versa, the image of the smallest element $0\in X$ under the natural inclusion $X \hookrightarrow G(X)$ is a singular strong oder unit.
\end{thm}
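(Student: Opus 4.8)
The statement splits into three tasks: (i) that $Y := [\Delta,e]$ is an OML under $g^{\ast} := g^{-1}\Delta$, (ii) that the inclusion $Y \hookrightarrow G$ realizes $G$ as the structure group $G(Y)$, and (iii) the converse, that $\iota(0)$ is a singular strong order unit in $G(X)$. I would treat them in this order, since (i) produces the OML on which (ii) depends, and (iii) becomes bookkeeping once the concrete model of $S(X)$ is available. For (i), I first note that an interval of a lattice is a bounded sublattice, so $Y$ is automatically bounded with bottom $\Delta$ and top $e$; all the work is in the complement. A short range check using right-invariance (right multiplication is an order-automorphism) gives $\Delta \le g^{\ast} \le e$, so $\ast$ maps $Y$ to itself. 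The structural fact I would extract next is that every $g \in Y$ commutes with $\Delta$: since $g g^{\ast} = g\,g^{-1}\Delta = \Delta$ with $g,g^{\ast} \in G^-$ and $\Delta \le \Delta = gg^{\ast}$, singularity (with the remark that $\Delta \le xy$ forces $xy = yx = x \wedge y$) yields $gg^{\ast} = g^{\ast}g$, which unwinds to $g\Delta = \Delta g$. Commutation then makes $\ast$ an involution, since $(g^{\ast})^{\ast} = \Delta^{-1} g\Delta = g$, and the same identity $gg^{\ast} = g \wedge g^{\ast} = \Delta$ is exactly \eqref{eq:oc1}.

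To finish (i), I would check that $\ast$ is order-reversing (again using right-invariance together with the commutation $g\Delta = \Delta g$) and then obtain \eqref{eq:oc2} by De~Morgan: from $\Delta^{\ast} = \Delta^{-1}\Delta = e$ and $(g \wedge g^{\ast})^{\ast} = g^{\ast} \vee g$ one reads off $g \vee g^{\ast} = e$. The orthomodular law $g \le h \Rightarrow g \vee (g^{\ast} \wedge h) = h$ is the delicate step; here I would translate the lattice identity into a product identity to which singularity applies, using normality of $\Delta$ to move left factors past the meet and join.

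For (ii) the goal is to realize $G$ as the universal group of the partial semigroup $(Y,\sqcap)$ of \autoref{lem:oml_partial_operation}, which requires three checks. First, the group multiplication must reproduce $\sqcap$: one shows that compatibility $x \top y$ is equivalent to $\Delta \le xy$, whence singularity gives $xy = x \wedge y$. Second, $Y$ must generate $G$: this is where the order-unit hypothesis enters, bounding every element below some power $\Delta^m$ and, via the fraction structure of the cone, writing it as a product of elements of $Y$ and their inverses. Third, the universal property, which I would phrase as matching presentations: any $\sqcap$-morphism $\varphi : Y \to H$ extends uniquely over $G$ because the defining relations $xy = x \sqcap y$ of $G(Y)$ are precisely those forced in $G$ by the first point, and $Y$ generates; normality of $\Delta$ guarantees that the lattice operations, not merely the products, are respected.

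For the converse (iii) I would use the cited Corollary~4.6 together with \autoref{lem:oml_partial_operation}: $X$ embeds as $[\iota(0),\iota(1)]$ with $\iota(1) = e$ (the top $1 \in X$ is the unit of $\sqcap$) and $S(X) = G(X)^-$. Writing $\Delta = \iota(0)$, the order-unit property follows from $X$ generating $S(X)$ with each generator above $\Delta$, giving cofinality of the $\Delta^m$; singularity is read directly off $\sqcap$ on $X$, since $\Delta \le ab$ drives $a,b$ and their product into the interval where multiplication is the commutative meet; and normality is the one genuinely nontrivial verification, established from the orthomodular law and the presentation of $S(X)$. The main obstacle I anticipate is, on both sides, the same difficulty: showing that left multiplication by $\Delta$ (its normality) interacts correctly with meets and joins, and deriving the orthomodular law in (i). Both reduce to converting lattice identities into group products that singularity can certify, and because only right-invariance is free, each such conversion must be carried out explicitly rather than by appeal to symmetry.
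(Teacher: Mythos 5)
This statement is imported verbatim from \cite[Theorem 4.10]{Rump_von_Neumann}; the paper you are reading gives no proof of it, so there is no internal argument to compare yours against. Judged on its own terms, your outline contains some genuinely correct and nontrivial observations for part (i): deriving the commutation $g\Delta = \Delta g$ for $g \in [\Delta,e]$ from singularity applied to $gg^{\ast} = \Delta$, and then getting the involution property and (OL1) for free, is exactly the right opening move, and the De Morgan derivation of (OL2) works once order-reversal of $\ast$ is in hand. Be careful, though, that order-reversal of $\ast$ is not a formal consequence of right-invariance plus $g\Delta = \Delta g$ as you suggest: in a merely right-ordered group, inversion need not reverse the order, and the needed inequality $h^{-1}g \leq e$ (as opposed to $gh^{-1} \leq e$) requires another application of singularity to commute $gh^{-1}$ past $h$. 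The orthomodular law can indeed be pushed through by such conversions, but your sketch does not carry any of them out.

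The genuine gap is in part (ii). Asserting that the universal property follows "by matching presentations" because "the defining relations $xy = x \sqcap y$ of $G(Y)$ are precisely those forced in $G$" only establishes a surjection $G([\Delta,e]) \twoheadrightarrow G$ (given generation, which itself needs the order-unit hypothesis and the fraction structure of the cone). The entire content of Rump's theorem is the injectivity of this map, i.e.\ that $G$ imposes \emph{no relations beyond} those of the partial semigroup $([\Delta,e],\sqcap)$; this cannot be read off from the definitions and is what requires the normal-form and Garside-theoretic machinery of \cite{Rump_von_Neumann}. The same criticism applies to normality of $\Delta$ in part (iii), which you yourself flag as "the one genuinely nontrivial verification" without supplying it, and to singularity in (iii) for arbitrary elements of $S(X)$ rather than just elements of $X$. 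As it stands the proposal is a plausible roadmap for parts of (i), but not a proof of the theorem.
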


In the following sections we will work out a description of $G(X)$ whenever $X$ is the OML of $k$-subspaces of a finite-dimensional $k$-vector space equipped with an anisotropic bilinear form.

This description will be explicit, meaning that the elements of $G(X)$ can be identified with elements of a matrix group over a ring of Laurent polynomials.

\section{OMLs of subspaces and paraunitary groups}

In this section, we will give a definition of the \emph{paraunitary group} associated with a $k$-bilinear form.

Let $V$ be a finite-dimensional vector space over a (commutative) field $k$.

Furthermore, let $b:V \times V \to k$ be an \emph{anisotropic}, symmetric $k$-bilinear form, i.e. $b$ is $k$-linear in each argument, $b(v,w) = b(w,v)$ holds for any $v,w \in V$ and we additionally have
\[
b(v,v) = 0 \quad \Leftrightarrow \quad v = 0
\]
(note that anisotropy implies non-degeneracy of $b$ so we do not need to demand this).

\begin{prop} \label{prop:Xb_is_oml}
Let $X$ be the lattice of $k$-subspaces of $V$, with intersection and sum of subspaces being the lattice operations.

The map
\begin{align*}
\ast: X & \to X \\
U & \mapsto U^{\ast} := \left\{ v \in V: b(u,v) = 0 \, \forall u \in U \right\}
\end{align*}
then is an orthocomplementation making $X$ into an OML.
\end{prop}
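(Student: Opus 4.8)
The plan is to verify in turn that $\ast$ is an order-reversing involution, that the axioms (OL1) and (OL2) hold, and finally that the orthomodular law is satisfied. Throughout I would lean on the dimension formula $\dim U + \dim U^{\ast} = \dim V$, valid for every subspace $U \subseteq V$ because anisotropy forces $b$ to be non-degenerate; this is the one external fact from the theory of bilinear forms that the whole argument rests on.

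Order-reversal is immediate from the definition: if $U \subseteq W$, then any vector orthogonal to all of $W$ is in particular orthogonal to all of $U$, so $W^{\ast} \subseteq U^{\ast}$. For the involution property $(U^{\ast})^{\ast} = U$, the inclusion $U \subseteq (U^{\ast})^{\ast}$ holds by definition, and equality follows by comparing dimensions: applying the dimension formula twice gives $\dim (U^{\ast})^{\ast} = \dim V - \dim U^{\ast} = \dim U$. Axiom (OL1) is where anisotropy does the essential work: if $v \in U \cap U^{\ast}$, then $v$ is orthogonal to every element of $U$, hence to itself, so that $b(v,v) = 0$ and therefore $v = 0$; thus $U \wedge U^{\ast} = U \cap U^{\ast} = 0$. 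Axiom (OL2) then follows from the dimension formula together with (OL1): since $\dim(U + U^{\ast}) = \dim U + \dim U^{\ast} - \dim(U \cap U^{\ast}) = \dim V$, we obtain $U \vee U^{\ast} = U + U^{\ast} = V$.

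For the orthomodular law I would argue directly, rather than invoke the modularity of the subspace lattice. Let $U \leq W$; the inclusion $U + (U^{\ast} \cap W) \subseteq W$ is clear. Conversely, given $w \in W$, use (OL2) to write $w = u + u'$ with $u \in U$ and $u' \in U^{\ast}$; since $u \in U \subseteq W$, the difference $u' = w - u$ lies in $W$ as well, so $u' \in U^{\ast} \cap W$ and hence $w \in U + (U^{\ast} \cap W)$. This yields $U \vee (U^{\ast} \wedge W) = W$, as required. There is no serious obstacle here: the argument is bookkeeping with dimensions and orthogonality, and the load-bearing hypothesis is anisotropy. Indeed, order-reversal and the involution identity hold for any non-degenerate symmetric form, but under non-degeneracy the axioms (OL1) and (OL2) become equivalent — each amounts to $U \cap U^{\ast} = 0$ — and it is precisely anisotropy that guarantees this, since an isotropic vector $v$ would satisfy $v \in \langle v \rangle \cap \langle v \rangle^{\ast}$.
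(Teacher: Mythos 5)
Your proposal is correct. The setup is the same as the paper's: both arguments rest on the dimension formula $\dim_k U + \dim_k U^{\ast} = \dim_k V$ for the non-degenerate form (the paper cites Jacobson for this together with the order-reversing involution property, which you instead verify directly), and both obtain \eqref{eq:oc1} from anisotropy via $b(v,v)=0 \Rightarrow v=0$ and \eqref{eq:oc2} by counting dimensions. Where you genuinely diverge is the orthomodular law. The paper restricts $b$ to the subspace $W$, observes that the restriction is still anisotropic and hence still non-degenerate (a point it flags in a footnote as the place where anisotropy, rather than mere non-degeneracy, is indispensable), and then reruns the complement argument inside $W$ to get $W = U \oplus (U^{\ast} \cap W)$. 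You instead decompose an arbitrary $w \in W$ against the ambient splitting $V = U \oplus U^{\ast}$ and note that the $U^{\ast}$-component $w - u$ stays inside $W$ because $U \subseteq W$. Your route is slightly more elementary and self-contained -- it never needs the observation that restrictions of anisotropic forms remain non-degenerate -- while the paper's route makes that structural fact explicit, which is arguably more illuminating about why anisotropy (and not just non-degeneracy) is the right hypothesis. Your closing remark that \eqref{eq:oc1} and \eqref{eq:oc2} are equivalent under non-degeneracy is correct and a nice addition, though not needed for the proof.
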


\begin{defi}
We denote by $X(b)$ the OML of $k$-subspaces of $V$, together with the orthocomplementation given in \autoref{prop:Xb_is_oml}.
\end{defi}

\begin{proof}(of \autoref{prop:Xb_is_oml})

That $U \mapsto U^{\ast}$ is an order-reversing involution whenever $b$ is non-degenerate is shown in \cite[p.346-347]{jacobson1}
, together with the fact that $\dim_k U^{\ast} + \dim_k U = \dim_k V$.

$U \cap U^{\ast}$ must be $0$ all $U \in X$, for any $v \in U \cap U^{\ast}$ fulfils $b(v,v) = 0$ and $b$ is anisotropic. By a dimension argument, $U \oplus U^{\ast} = V$.

Restricting $b$ to subspace $W$ again leaves us with an anisotropic $k$-bilinear form on $W$ which is again non-degenerate\footnote{This would be \emph{wrong} if $b$ was not anisotropic!}.

For any $U\in X$ with $U \subseteq W$ that implies by the same reasoning as above:
\begin{align*}
W & = U \oplus \left\{ v \in W: b(u,v) = 0 \, \forall u \in U \right\} \\
& = U \oplus (U^{\ast} \cap W),
\end{align*}
proving that $X(b)$ is an OML.
\end{proof}

That makes sense of our question how to describe $G(X(b))$, i.e. the structure group of the OML $X(b)$.

The non-degeneracy of $b$ implies that each $k$-linear $\varphi:V \to V$ has a unique \emph{left-adjoint}, i.e. there is a $k$-linear mapping $\varphi^{\prime}:V\to V$ fulfilling
\[
b(\varphi^{\prime}(v),w) = b(v,\varphi(w))
\]
for all $v,w \in V$.

We define the \emph{unitary group} $U(b)$ associated with $b$ to be the group of all $k$-linear maps $\varphi: V \to V$ such that there holds
\[
b(\varphi(v), \varphi(w)) = b(v,w)
\]
for all $v,w\in V$ or, equivalently, $\varphi^{\prime}\varphi = 1$.

We will now extend $b$ as follows:

Let $-^{\ast}: k\laur \to k\laur$ be the unique morphism of $k$-algebras that sends $t$ to $t^{-1}$. Clearly, $-^{\ast}$ is an involution of $k\laur$, i.e. $f^{\ast \ast} = f$ holds for all $f \in k\laur$.

Additionally, we extend $V$ to $V\laur := k\laur \otimes_k V$. Using coordinates, $V\laur$ can be identified with $k\laur^n$.

Clearly, the elements of $k\laur^n$ can also be written as
\[
v = \sum_{i = -\infty}^{\infty} t^i v_i
\]
with $v_i \in V$, and $v_i = 0$ for all but finitely many indices $i$. We will mean $v_i$ when speaking of the $i$-th \emph{coordinate} of $v$.

We now extend $b$ to $V\laur$ by the rule
\begin{equation} \label{eq:def_of_b_tilde}
\tilde{b}(v,w) := \sum_{n=-\infty}^{\infty} t^n \cdot \left( \sum_{i = -\infty}^{\infty} b(v_{i-n},w_i)\right).
\end{equation}
This form is additive in both arguments, $k\laur$-linear in the second and $k\laur$-antilinear in the first, i.e. we have
\[
\tilde{b}(f v,g w) = f^{\ast}g\cdot \tilde{b}(v,w).
\]
for $f,g \in k \laur$, $v,w \in V \laur$.

This can be seen easiest as follows: Clearly, $\tilde{b}(t^iv_i,t^jw_j) = t^{j-i}b(v_i,w_j)$. This implies
\[
\tilde{b}(t^m t^i v_i, t^n t^j w_j) = t^{n-m}t^{j-i} \cdot b(v_i,w_j) = (t^m)^{\ast}t^n \cdot \tilde{b}(t^iv_i,t^jw_j).
\]
which can then be extended by bilinearity to arbitrary elements of $k\laur$ resp. $V \laur$.

For elements of $k\laur$ and $V \laur$ we can specialize $t$ to arbitrary values in $k \setminus \{ 0 \}$ in an obvious manner in order to get elements in $k$ resp. $V$.

It is an important feature of $\tilde{b}$ that it specializes to $b$ in the following sense:

\begin{lem}
For $v,w \in V\laur$ holds
\begin{equation} \label{eq:specializing_b}
\left( \tilde{b}(v,w) \right)(1) = b(v(1),w(1))
\end{equation}
\end{lem}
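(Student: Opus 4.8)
The plan is to evaluate both sides directly and then match the resulting finite double sums by a single reindexing. The key preliminary observation, which I would state at the outset, is that for $v = \sum_i t^i v_i$ and $w = \sum_i t^i w_i$ in $V\laur$ only finitely many coordinates $v_i, w_i \in V$ are nonzero. Hence every sum appearing below is finite and may be rearranged freely, and the evaluation map $k\laur \to k$, $f \mapsto f(1)$, being a $k$-algebra homomorphism, commutes with all these finite operations. This finiteness is exactly what legitimizes the manipulations.

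First I would compute the left-hand side. Specializing $t$ to $1$ in the defining formula \eqref{eq:def_of_b_tilde} collapses every power $t^n$ to $1$, so that
\[
\left( \tilde{b}(v,w) \right)(1) = \sum_{n=-\infty}^{\infty} \left( \sum_{i=-\infty}^{\infty} b(v_{i-n}, w_i) \right).
\]
Next I would expand the right-hand side: since evaluation at $t=1$ gives $v(1) = \sum_i v_i$ and $w(1) = \sum_j w_j$, bilinearity of $b$ yields
\[
b(v(1), w(1)) = b\left( \sum_i v_i, \sum_j w_j \right) = \sum_{i,j} b(v_i, w_j).
\]

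Finally, I would identify the two double sums by the substitution $j := i - n$ (equivalently $n = i - j$): for each fixed $i$, as $n$ ranges over $\mathbb{Z}$ the index $j = i-n$ also ranges over all of $\mathbb{Z}$, so the left-hand sum equals $\sum_{i,j} b(v_j, w_i)$, which is $\sum_{i,j} b(v_i, w_j)$ after renaming the dummy indices. This matches the expression for $b(v(1),w(1))$ and closes the argument.

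I do not expect any genuine obstacle: the content of the lemma is simply that the diagonal bookkeeping encoded in $\tilde{b}$ degenerates at $t=1$ into the ordinary double sum defining $b(v(1),w(1))$. The only point requiring mild care is getting the index substitution $n \mapsto i-j$ right, together with the remark that all sums are finite, so that reordering them and pulling the evaluation map inside the sums is justified.
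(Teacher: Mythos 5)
Your proof is correct and amounts to the same computation as the paper's: the paper verifies the identity on monomials $t^iv_i$, $t^jw_j$ and invokes bilinearity, while you expand both sides as full (finite) double sums and match them via the reindexing $j = i-n$; the substance is identical. Your explicit remark that finiteness of the sums justifies the rearrangement is a welcome touch of care, but there is nothing genuinely different here.
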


\begin{proof}
For $v = t^iv_i$, $w = t^jw_j$ we have
\[
\left( \tilde{b}(v,w) \right)(1) = 1^{j-i}b(v_i,w_j) = b(v(1),w(1)).
\]
and this extends bilinearly to all elements of $V\laur$.
\end{proof}

\begin{lem} \label{lem:tilde_b_anisotropic}
$\tilde{b}$ is anisotropic, i.e. we have
\[
\tilde{b}(v,v) = 0 \Leftrightarrow v = 0
\]
where $v \in V\laur$.
\end{lem}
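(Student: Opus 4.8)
The plan is to prove the nontrivial implication, $\tilde{b}(v,v) = 0 \Rightarrow v = 0$, by contraposition, using induction on the \emph{width} of a nonzero element. For $v = \sum_i t^i v_i \neq 0$ write $\ell(v) := \max\{i : v_i \neq 0\} - \min\{i : v_i \neq 0\} \geq 0$. I would show that $v \neq 0$ forces $\tilde{b}(v,v) \neq 0$, by induction on $\ell(v)$. The base case $\ell(v) = 0$ is immediate: here $v = t^m v_m$ with $v_m \neq 0$, and $\tilde{b}(t^m v_m, t^m v_m) = (t^m)^{\ast} t^m b(v_m, v_m) = b(v_m, v_m) \neq 0$ by anisotropy of $b$. (The converse implication $v = 0 \Rightarrow \tilde b(v,v)=0$ is trivial.)

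The crux of the inductive step is that specialization at $t = 1$ converts the Hermitean norm back into the anisotropic form $b$. Indeed, if $\tilde{b}(v,v) = 0$, then by \eqref{eq:specializing_b} we get $b(v(1), v(1)) = \left(\tilde{b}(v,v)\right)(1) = 0$, so $v(1) = 0$ by anisotropy of $b$. Writing $v = t^m \tilde{v}$ with $\tilde{v} \in V\polyp$ an honest polynomial with nonzero constant term, the relation $\tilde{v}(1) = 0$ allows us to factor out $(t-1)$ in each coordinate, giving $\tilde{v} = (t-1)\tilde{w}$ for some $\tilde{w} \in V\polyp$. Setting $w := t^m \tilde{w} \in V\laur$ we obtain $v = (t-1)w$ with strictly smaller width $\ell(w) = \ell(v) - 1$.

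It remains to transport the vanishing of $\tilde{b}(v,v)$ down to $w$. Using the $\ast$-sesquilinearity $\tilde{b}(fv', gw') = f^{\ast} g\, \tilde{b}(v',w')$ with $f = g = t-1$, we compute
\[
0 = \tilde{b}(v,v) = (t-1)^{\ast}(t-1)\,\tilde{b}(w,w) = -t^{-1}(t-1)^2\,\tilde{b}(w,w).
\]
Since $k\laur$ is an integral domain and $-t^{-1}(t-1)^2 \neq 0$, this forces $\tilde{b}(w,w) = 0$; the induction hypothesis then yields $w = 0$, hence $v = 0$, contradicting $v \neq 0$. I do not expect a genuine obstacle here: the only real idea is the specialization at $t=1$ that exchanges Hermitean anisotropy for anisotropy of $b$, after which the factorization and the descent on the width are entirely routine.
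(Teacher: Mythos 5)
Your proof is correct, and it hinges on exactly the same key idea as the paper's: specialize $\tilde{b}(v,v)$ at $t=1$ via \eqref{eq:specializing_b} and use anisotropy of $b$ to force $v(1)=0$. The only difference is in the bookkeeping around that step --- the paper normalizes $v$ once to a polynomial vector whose entries have no common zero, so that $v(1)=0$ is an immediate contradiction, whereas you repeatedly factor out $(t-1)$ and descend on the width using that $(t-1)^{\ast}(t-1)\neq 0$ in the domain $k\laur$; both handle the same degenerate case and your version is sound.
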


\begin{proof}
Let $v = v(t) \in k\laur \setminus \{0 \}$ fulfil $\tilde{b}(v(t),v(t)) = 0$.

We might assume that $v(t) \in k\polyp^n$ and that the entries of $v(t)$ have no common zero - this can be achieved by multiplication by a suitable polynomial, using antilinearity. Using \eqref{eq:specializing_b}, we get
\[
b(v(1),v(1)) = 0
\]
which implies $v(1) = 0$. Therefore $t=1$ is a common zero of all entries, contrary to our assumption.
\end{proof}

We will now continue the assignment $\varphi \mapsto \varphi^{\prime}$ in a meaningful way to $\textnormal{End}_{k\laur} V\laur$. Note that each element $\varphi$ in the latter ring can be written as a Laurent series of elements in $\textnormal{End}_kV$, i.e. 
\[
\varphi = \sum_{i = -\infty}^{\infty} t^{i} \varphi_i
\]
where $\varphi_i \in \textnormal{End}_k V$.

This will later allow us to specialize $k\laur$-endomorphisms of $V\laur$.

\begin{prop} \label{prop:para_adjoint}
For each $\varphi \in \textnormal{End}_{k\laur} V\laur$ there is a unique $\varphi^{\prime} \in \textnormal{End}_{k\laur} V\laur$ fulfilling
\begin{equation} \label{eq:adjoint_b_tilde}
\tilde{b}(\varphi^{\prime}(v),w) = \tilde{b}(v,\varphi(w))
\end{equation}
which is given by
\begin{equation} \label{eq:adjoint_b_tilde2}
\varphi^{\prime} = \sum_{i=-\infty}^{\infty} t^{-i} \varphi_i^{\prime}.
\end{equation}
\end{prop}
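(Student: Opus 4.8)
The plan is to verify the explicit formula \eqref{eq:adjoint_b_tilde2} by a direct computation and then extract uniqueness from the anisotropy of $\tilde{b}$. First I would record that $\textnormal{End}_{k\laur} V\laur$ is just the matrix ring $M_n(k\laur)$, so every $\varphi$ is a \emph{finite} Laurent sum $\varphi = \sum_i t^i \varphi_i$ with $\varphi_i \in \textnormal{End}_k V$ and all but finitely many $\varphi_i$ equal to zero. Since $b$ is non-degenerate, each $\varphi_i$ possesses a unique left-adjoint $\varphi_i^{\prime}$ with respect to $b$, and therefore the proposed $\varphi^{\prime} := \sum_i t^{-i}\varphi_i^{\prime}$ is again a finite sum, hence a genuine element of $\textnormal{End}_{k\laur} V\laur$.

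For existence, I would check \eqref{eq:adjoint_b_tilde} on monomials. Because the involution $-^{\ast}$ fixes $k$, the form $\tilde{b}$ is genuinely $k$-bilinear, and $\varphi, \varphi^{\prime}$ are in particular $k$-linear; hence both sides of \eqref{eq:adjoint_b_tilde} are $k$-bilinear in $(v,w)$, and it suffices to take $v = t^a u$ and $w = t^c y$ with $u,y \in V$. Using the basic identity $\tilde{b}(t^p z, t^q z^{\prime}) = t^{q-p} b(z,z^{\prime})$, the right-hand side $\tilde{b}(v,\varphi(w))$ becomes $\sum_i t^{\,i+c-a}\, b(u,\varphi_i(y))$, while the left-hand side $\tilde{b}(\varphi^{\prime}(v),w)$ becomes $\sum_i t^{\,i+c-a}\, b(\varphi_i^{\prime}(u),y)$. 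The defining property $b(\varphi_i^{\prime}(u),y) = b(u,\varphi_i(y))$ of the adjoints on $V$ makes the two expressions agree term by term, establishing \eqref{eq:adjoint_b_tilde}.

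Uniqueness is where \autoref{lem:tilde_b_anisotropic} does the work. If $\psi_1,\psi_2$ both satisfy \eqref{eq:adjoint_b_tilde}, then $\psi := \psi_1 - \psi_2$ satisfies $\tilde{b}(\psi(v),w) = 0$ for all $v,w \in V\laur$. Specializing $w = \psi(v)$ yields $\tilde{b}(\psi(v),\psi(v)) = 0$, so $\psi(v) = 0$ for every $v$ by anisotropy of $\tilde{b}$; hence $\psi = 0$ and $\varphi^{\prime}$ is the unique left-adjoint.

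I do not expect a serious obstacle: the argument is bookkeeping of exponents together with the $V$-level adjoint identity. The two points demanding care are (i) the reduction to monomials, which rests precisely on $-^{\ast}$ fixing $k$ so that $\tilde{b}$ is $k$-bilinear, and (ii) checking that the index shift from $t^i$ to $t^{-i}$ in passing from $\varphi$ to $\varphi^{\prime}$ exactly compensates the antilinear twist of $\tilde{b}$ in its first argument — which is exactly what the monomial computation confirms.
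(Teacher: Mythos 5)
Your proposal is correct and follows essentially the same route as the paper: verify the formula on monomials $t^a u$, $t^c y$ using the $V$-level adjoint identity $b(\varphi_i^{\prime}(u),y)=b(u,\varphi_i(y))$, and derive uniqueness from the non-degeneracy of $\tilde{b}$ supplied by \autoref{lem:tilde_b_anisotropic}. You merely spell out two points the paper leaves implicit — that the defining sum for $\varphi^{\prime}$ is finite, and the explicit difference argument $\tilde{b}(\psi(v),\psi(v))=0\Rightarrow\psi=0$ — both of which are fine.
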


\begin{proof}
\autoref{lem:tilde_b_anisotropic} implies that $\tilde{b}$ is non-degenerate. The uniqueness of a left-adjoint follows easily from that.

That the given element indeed is the correct choice already follows (by linearity) from showing validity on generators, i.e. for $v = t^i v_i$, $w = t^jw_j$, $\varphi = t^k \varphi_k$:
\[
\tilde{b}(t^{-k}\varphi_k^{\prime}t^iv_i,t^jw_j) = t^{j-i+k} b(\varphi_k^{\prime}v_i,w_j) = t^{j+k-i} b(v_i,\varphi_k w_j) = \tilde{b}(t^iv_i,t^k t^jw_j).
\]
\end{proof}

We can now make the first central definition of this section
\begin{defi}
The \emph{paraunitary group} $\PU(b)$ associated with $b$ is the group of invertible $k\laur$-linear endomorphisms $\varphi: V\laur \to V\laur$ such that
\begin{equation} \label{defi:para_group}
\tilde{b}(\varphi(v),\varphi(w)) = \tilde{b}(v,w)
\end{equation}
holds for all $v,w \in V\laur$.
\end{defi}

Rewriting \eqref{defi:para_group} and using \autoref{prop:para_adjoint}, one gets another description of $\PU(b)$:

\begin{cor} \label{cor:para_matrix}
The group $\PU(b)$ is given by all $\varphi \in \textnormal{End}_{k\laur}V\laur$ such that
\begin{equation} \label{eq:para_matrix}
\varphi^{\prime}\varphi = 1.
\end{equation}
\end{cor}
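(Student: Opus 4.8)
The plan is to translate the quadratic paraunitarity condition \eqref{defi:para_group} into the linear relation \eqref{eq:para_matrix} by moving one copy of $\varphi$ across the form onto its adjoint, and then to invoke non-degeneracy. First I would use the defining property of the left-adjoint from \autoref{prop:para_adjoint}, namely $\tilde{b}(\varphi^{\prime}(x),y) = \tilde{b}(x,\varphi(y))$, to rewrite, for arbitrary $v,w \in V\laur$,
\[
\tilde{b}(\varphi(v),\varphi(w)) = \tilde{b}(\varphi^{\prime}\varphi(v),w).
\]
Thus \eqref{defi:para_group} holds for all $v,w$ if and only if $\tilde{b}\bigl((\varphi^{\prime}\varphi - 1)(v),w\bigr) = 0$ holds for all $v,w \in V\laur$.

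The key step is then non-degeneracy: since $\tilde{b}$ is anisotropic by \autoref{lem:tilde_b_anisotropic}, it is non-degenerate, so any $u \in V\laur$ with $\tilde{b}(u,w) = 0$ for all $w$ must vanish. Applying this with $u = (\varphi^{\prime}\varphi - 1)(v)$ gives $(\varphi^{\prime}\varphi - 1)(v) = 0$ for every $v$, i.e. $\varphi^{\prime}\varphi = 1$. Conversely, reversing these two steps (again using the adjoint identity, now read from right to left) shows that any $\varphi$ satisfying $\varphi^{\prime}\varphi = 1$ preserves $\tilde{b}$ in the sense of \eqref{defi:para_group}.

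The remaining, and only genuinely delicate, point is the word \emph{invertible} in the definition of $\PU(b)$: I must check that the one-sided identity $\varphi^{\prime}\varphi = 1$ already forces $\varphi$ to be invertible, so that the set cut out by \eqref{eq:para_matrix} does not secretly contain non-units. Here I would use that, $V$ being finite-dimensional, $\textnormal{End}_{k\laur} V\laur \cong M_n(k\laur)$ is a matrix ring over the \emph{commutative} ring $k\laur$. For square matrices over a commutative ring a left inverse is automatically a two-sided inverse: taking determinants in $\varphi^{\prime}\varphi = 1$ yields $\det(\varphi^{\prime})\det(\varphi) = 1$, so $\det(\varphi)$ is a unit in $k\laur$, whence $\varphi$ is invertible and its inverse is forced to equal $\varphi^{\prime}$. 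Consequently $\varphi \in \PU(b)$, and the two descriptions coincide.

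I expect the form-manipulation and the appeal to non-degeneracy to be entirely routine; the one point worth flagging as the main obstacle is this last invertibility subtlety, which is precisely where the hypotheses that $k\laur$ is commutative and that $V$ is finite-dimensional are used.
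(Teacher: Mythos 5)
Your proof is correct and follows the same route the paper intends: the paper states this corollary without a written proof, remarking only that it follows by rewriting \eqref{defi:para_group} using \autoref{prop:para_adjoint}, which is exactly your adjoint-plus-non-degeneracy manipulation (with non-degeneracy of $\tilde{b}$ supplied by \autoref{lem:tilde_b_anisotropic}). Your additional determinant argument showing that $\varphi^{\prime}\varphi = 1$ already forces invertibility of $\varphi$ in $M_n(k\laur)$ is a correct and welcome piece of diligence that the paper leaves implicit.
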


It is easily seen that mapping $\varphi$ to $\varphi(1)$ is multiplicative, therefore we get that the \emph{canonical specialization}
\begin{align*}
\varepsilon_1: \PU(b) & \to U(b) \\
\varphi & \mapsto \varphi(1)
\end{align*}
which is well-defined due to \eqref{eq:specializing_b} and the easy to see identity $(\varphi(v))(1) = \varphi(1)(v(1))$.

Consequently, $\varepsilon_1$ is a homomorphism of groups.

There is also a \emph{canonical inclusion} $\iota: U(b) \to \PU(b)$, given by sending $\varphi \in U(b)$ to $t^0 \varphi \in \PU(b)$. We will refer to the image of $\iota$ as the \emph{constants}.

It is easily seen that $\varepsilon_1 \circ \iota = \text{id}_{U(b)}$. 

We can now define the group which is relevant to our further investigations:
\begin{defi}
The \emph{pure paraunitary group} associated with $b$ is defined as the subgroup
\[
\PPU(b):= \ker \varepsilon_1 \unlhd \PU(b).
\]
\end{defi}

We have seen that $\varepsilon_1 \circ \iota = \text{id}_{U(b)}$ which implies the following semidirect decomposition:
\begin{cor} \label{cor:semidirect_product}
$\PU(b) = \PPU(b) \rtimes U(b)$ where $U(b)$ is identified with the matrices with the constants in $\PU(b)$.
\end{cor}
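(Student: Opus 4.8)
The plan is to verify the two standard conditions for an (internal) semidirect product: that $\PPU(b)$ is normal in $\PU(b)$ (which holds by definition, being a kernel), and that $\PU(b)$ decomposes as a product of the two subgroups with trivial intersection. The key observation we already have in hand is the relation $\varepsilon_1 \circ \iota = \text{id}_{U(b)}$, so that $\iota$ is a section of the surjection $\varepsilon_1$; the whole statement will fall out of this splitting. Normality of $\PPU(b)$ requires no further work, since it is defined as $\ker \varepsilon_1$ and kernels of group homomorphisms are always normal subgroups.

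First I would establish that every $\varphi \in \PU(b)$ factors as a product of an element of $\PPU(b)$ and a constant. Given $\varphi$, set $u := \varepsilon_1(\varphi) = \varphi(1) \in U(b)$ and consider $\psi := \varphi \cdot \iota(u)^{-1}$. Applying $\varepsilon_1$ and using that it is a group homomorphism together with $\varepsilon_1 \circ \iota = \text{id}_{U(b)}$, I get $\varepsilon_1(\psi) = \varepsilon_1(\varphi)\,\varepsilon_1(\iota(u))^{-1} = u u^{-1} = \text{id}$, so $\psi \in \ker \varepsilon_1 = \PPU(b)$. Hence $\varphi = \psi \cdot \iota(u)$ with $\psi \in \PPU(b)$ and $\iota(u)$ a constant, giving $\PU(b) = \PPU(b) \cdot \iota(U(b))$.

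Next I would check that the intersection of the two subgroups is trivial. If $g \in \PPU(b) \cap \iota(U(b))$, write $g = \iota(u)$ for some $u \in U(b)$; since $g \in \PPU(b) = \ker \varepsilon_1$, applying $\varepsilon_1$ and using $\varepsilon_1 \circ \iota = \text{id}_{U(b)}$ yields $u = \varepsilon_1(\iota(u)) = \varepsilon_1(g) = \text{id}$, so $g = \iota(\text{id}) = \text{id}$. Together with normality of $\PPU(b)$ and the product decomposition just established, this shows $\PU(b)$ is the internal semidirect product $\PPU(b) \rtimes \iota(U(b))$, with $\iota(U(b))$ identified with the constants.

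There is no genuine obstacle here: the statement is a direct instance of the elementary fact that a short exact sequence $1 \to \PPU(b) \to \PU(b) \xrightarrow{\varepsilon_1} U(b) \to 1$ that admits a section $\iota$ splits as a semidirect product, and all the required ingredients ($\varepsilon_1$ a homomorphism, $\iota$ a homomorphic section, $\PPU(b) = \ker\varepsilon_1$) have already been assembled in the preceding paragraphs of the excerpt. The only point worth stating carefully is the identification of $U(b)$ with the constants via $\iota$, which is what makes the decomposition internal rather than merely abstract.
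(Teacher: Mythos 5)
Your proof is correct and follows exactly the route the paper intends: the paper gives no explicit proof, simply noting that $\varepsilon_1 \circ \iota = \mathrm{id}_{U(b)}$ implies the decomposition, and your argument is just the standard split-exact-sequence verification (normality of the kernel, the factorization $\varphi = \psi\cdot\iota(u)$, and trivial intersection) spelled out in full.
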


\begin{rema}
We called these matrices \emph{pure} in order to suggest an analogy to the definition of the pure braid group $P_n$ (\cite[Section 1.3]{Kassel_braid}) as the kernel of a specialization map
\[
\varepsilon: B_n \to S_n
\]
where $B_n$ is the braid group on $n$ strands and $S_n$ is the symmetric group on $n$ letters.

We will see later that $\PPU(b)$ might be an even better counterpart of $B_n$.
\end{rema}

\section{A right-invariant lattice-order on \texorpdfstring{$\PPU(b)$}{PPU(b)}}

In what follows, we will denote by $M_n(R)$ the ring of $n \times n$-matrices over a ring $R$ or, equivalently, $\textnormal{End}_RR^n$.

We can easily define a right-invariant partial order on $\PPU(b)$:

We therefore define $\PPU(b)^+$ resp. $\PPU(b)^-$ as the submonoid of elements in $\PPU(b)$ which are contained in $M_n(k\polyp)$ resp. $M_n(k\polym)$. Equivalently, these are the elements $\varphi \in \PPU(b)$ whose Laurent expansion has $\varphi_i = 0$ for $i < 0$ resp. $i > 0$.

\begin{prop} \label{prop:PPU+_is_cone}
$\PPU(b)^+$ and $\PPU(b)^-$ are the submonoids of positive resp. negative elements of a right-invariant partial order on $\PPU(b)$ given by
\[
\psi \geq \varphi :\Leftrightarrow \psi \varphi^{-1} \in \PPU(b)^+ \Leftrightarrow \varphi \psi^{-1} \in \PPU(b)^-
\]
\end{prop}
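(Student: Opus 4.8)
The plan is to invoke \autoref{lem:cones_define_orders}, which reduces the statement to verifying that $\PPU(b)^+$ is a submonoid of $\PPU(b)$ satisfying the antisymmetry condition $g, g^{-1} \in \PPU(b)^+ \Rightarrow g = e$, together with the compatibility of the two descriptions of the order. First I would check that $\PPU(b)^+$ is indeed closed under multiplication: if $\varphi, \psi \in \PPU(b)$ both lie in $M_n(k\polyp)$, then so does their product, since $k\polyp$ is a subring of $k\laur$; and the identity matrix clearly lies in $M_n(k\polyp) \cap \PPU(b)$. So $\PPU(b)^+$ is a submonoid, and the same argument with $k\polym$ gives that $\PPU(b)^-$ is one.

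Next I would establish the antisymmetry condition, which is the crux of the argument. Suppose $\varphi \in \PPU(b)^+$ with $\varphi^{-1} \in \PPU(b)^+$ as well. Then both $\varphi$ and $\varphi^{-1}$ have entries in $k\polyp$, and both specialize to the identity under $\varepsilon_1$ since they lie in $\PPU(b) = \ker \varepsilon_1$. Writing $\varphi = \sum_{i \geq 0} t^i \varphi_i$ and $\varphi^{-1} = \sum_{j \geq 0} t^j \psi_j$, the fact that they are mutually inverse forces $\varphi_0 \psi_0 = 1$ in $\textnormal{End}_k V$, while $\varphi(1) = \sum_i \varphi_i = 1$ and similarly for $\psi$. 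The cleanest route is to use \autoref{cor:para_matrix}: paraunitarity gives $\varphi' \varphi = 1$, and by \eqref{eq:adjoint_b_tilde2} the adjoint $\varphi'$ of an element of $M_n(k\polyp)$ lies in $M_n(k\polym)$. Hence $\varphi^{-1} = \varphi'$ has entries in $k\polym$; combined with $\varphi^{-1} \in \PPU(b)^+ \subseteq M_n(k\polyp)$, we get $\varphi^{-1} \in M_n(k\polyp \cap k\polym) = M_n(k)$, so $\varphi^{-1}$ is a constant. But constants in $\ker \varepsilon_1$ must equal the identity, since $\varepsilon_1$ restricted to the constants is $\varepsilon_1 \circ \iota = \textnormal{id}$. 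Therefore $\varphi^{-1} = e$ and $\varphi = e$.

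Finally I would verify that the two stated descriptions of the order agree, i.e. $\psi\varphi^{-1} \in \PPU(b)^+$ iff $\varphi\psi^{-1} \in \PPU(b)^-$. Setting $g := \psi\varphi^{-1}$, this amounts to showing $g \in \PPU(b)^+ \Leftrightarrow g^{-1} \in \PPU(b)^-$, which is exactly the statement that inversion swaps $\PPU(b)^+$ and $\PPU(b)^-$. This again follows from \autoref{cor:para_matrix}: for $g \in \PPU(b)$ we have $g^{-1} = g'$, and by \eqref{eq:adjoint_b_tilde2} the adjoint sends $M_n(k\polyp)$ into $M_n(k\polym)$ and conversely, so $g$ has entries in $k\polyp$ precisely when $g^{-1} = g'$ has entries in $k\polym$. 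With the submonoid and antisymmetry conditions in hand, \autoref{lem:cones_define_orders} then delivers the right-invariant order with the claimed cones.

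I expect the main obstacle to be the antisymmetry step, specifically pinning down that an element whose entries and whose inverse's entries are all polynomial in $t$ must be constant. The key leverage is the adjoint formula \eqref{eq:adjoint_b_tilde2}, which converts the algebraic inverse into the entrywise-transposed-and-$t$-inverted adjoint; without exploiting paraunitarity one would have to argue more laboriously with leading and trailing coefficients of the matrix product. Everything else is routine closure and bookkeeping.
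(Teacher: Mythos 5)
Your proposal is correct and follows essentially the same route as the paper: both arguments hinge on the identity $\varphi^{-1}=\varphi^{\prime}$ from \autoref{cor:para_matrix} together with the adjoint formula \eqref{eq:adjoint_b_tilde2} to show that inversion swaps $\PPU(b)^+$ and $\PPU(b)^-$, so that an element with $\varphi,\varphi^{-1}\in\PPU(b)^+$ is constant and hence trivial in $\ker\varepsilon_1$, after which \autoref{lem:cones_define_orders} finishes the job. You spell out the submonoid closure and the equivalence of the two descriptions of the order, which the paper leaves implicit, but the substance is identical.
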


\begin{proof}
Let $\varphi \in \PPU(b)^+$. Equation \eqref{defi:para_group} tells us $\varphi^{-1} = \varphi^{\prime}$ and $\varphi^{\prime} \in \PPU(b)^-$ follows from \eqref{eq:adjoint_b_tilde2}.

Therefore, if $\varphi,\varphi^{-1}$ both lie in $\PPU(b)^+$, we have $\varphi \in \PPU(b)^+ \cap \PPU(b)^-$, i.e. it is of the form $t^0\varphi_0$.

But $t^0 \varphi_0 \in \PPU(b)$ forces $\varphi_0$ to be $1$, showing that $\PPU(b)^+ \cap (\PPU(b)^+)^{-1} = \{ 1 \}$.

Furthermore, $\PPU(b)^+$ is a subsemigroup of $\PPU(b)$ and therefore (by \autoref{lem:cones_define_orders}) defines a right-invari\-ant order of $\PPU(b)$.
\end{proof}

The ultimate goal of this section is to prove the following
\begin{thm} \label{thm:PPU_is_lattice}
The right-invariant order on $\PPU(b)$ given by the positive cone $\PPU(b)^+$ is a lattice-order.
\end{thm}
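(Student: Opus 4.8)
The plan is to deduce the theorem from an analysis of the negative cone $\PPU(b)^-$, using right-invariance to transport the result to all of $\PPU(b)$. Since every $\varphi \in \PPU(b)^-$ lies in $M_n(k\polym)$, it satisfies $\varphi(V\polym) \subseteq V\polym$ and so induces a $k\polym$-endomorphism $\bar\varphi$ of $V^{\oplus} := V\laur/V\polym$; I set
\[
\Omega(\varphi) := \ker \bar\varphi = \varphi^{-1}(V\polym)/V\polym .
\]
A degree count shows this lands in the right place: if $t^m$ (with $m\le 0$) is the lowest power of $t$ occurring in $\varphi$, then $\varphi^{-1} = \varphi'$ raises $t$-degrees by at most $-m$ via \eqref{eq:adjoint_b_tilde2}, so $V\polym \subseteq \varphi^{-1}(V\polym) \subseteq t^{-m}V\polym$ and $\Omega(\varphi)$ is a finite-length, hence finitely generated, $k\polym$-submodule of $V^{\oplus}$. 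Thus $\Omega$ maps $\PPU(b)^-$ into $\Sub(V^{\oplus})$, which is a lattice because $k\polym$ is a PID (hence Noetherian), so that intersections of finitely generated submodules are again finitely generated. The aim is to show $\Omega$ is an isomorphism of posets, where $\PPU(b)^-$ carries the right-divisibility order $\varphi \preceq \psi :\Leftrightarrow \psi \in \PPU(b)^-\varphi$.

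Monotonicity, order-reflection and injectivity are then direct consequences of paraunitarity and purity. If $\psi = \eta\varphi$ with $\eta\in\PPU(b)^-$ then $\psi^{-1}(V\polym) = \varphi^{-1}\eta^{-1}(V\polym) \supseteq \varphi^{-1}(V\polym)$, giving $\Omega(\varphi)\subseteq\Omega(\psi)$; conversely, if $\Omega(\varphi)\subseteq\Omega(\psi)$ then $\eta := \psi\varphi^{-1}\in\PPU(b)$ satisfies $\eta(V\polym) = \psi(\varphi^{-1}(V\polym)) \subseteq \psi(\psi^{-1}(V\polym)) = V\polym$, whence $\eta\in\PPU(b)\cap M_n(k\polym) = \PPU(b)^-$ and $\psi=\eta\varphi$. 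For injectivity it suffices to check that $\Omega(\varphi)=0$ forces $\varphi=e$: here $\varphi^{-1}(V\polym)=V\polym$ makes $\varphi$ invertible on $V\polym$, so both $\varphi$ and $\varphi^{-1}$ lie in $M_n(k\polym)$, while $\varphi^{-1}=\varphi'\in M_n(k\polyp)$ by \eqref{eq:adjoint_b_tilde2}; hence $\varphi$ is a constant $t^0u$ with $u\in U(b)$, and purity gives $u=1$. Applying this to $\psi\varphi^{-1}$ when $\Omega(\varphi)=\Omega(\psi)$ yields $\varphi=\psi$, so $\Omega$ is an order-embedding.

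The substantial step, and the one I expect to be the main obstacle, is surjectivity of $\Omega$. Given a finitely generated $M\subseteq V^{\oplus}$, lift it to the lattice $L$ with $V\polym\subseteq L\subseteq t^{-m}V\polym$ and $L/V\polym = M$; one must produce $\varphi\in\PPU(b)^-$ with $\varphi(L)=V\polym$, i.e. a paraunitary map carrying $L$ isometrically onto the standard lattice. Since $M$ is a finite-length module over the PID $k\polym$, I would build $\varphi$ by induction on the length of $M$, at each step splitting off a one-dimensional piece and removing it by an \emph{elementary} paraunitary map. Anisotropy of $\tilde b$ (\autoref{lem:tilde_b_anisotropic}) is exactly what makes this possible, since it guarantees $\tilde b(u,u)\neq 0$ for every $u\neq 0$, so that reflections and transvections along $u$ are well-defined and paraunitary. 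Having obtained some $\varphi\in\PU(b)\cap M_n(k\polym)$ with $\varphi(L)=V\polym$, one makes it pure by passing to $\iota(\varepsilon_1(\varphi))^{-1}\varphi$, a constant correction (\autoref{cor:semidirect_product}) that preserves both membership in $M_n(k\polym)$ and the image of $L$. Verifying that this process terminates inside the negative cone is the technical heart of the argument; it then gives $\Omega(\varphi)=M$ and establishes the poset isomorphism, so that $\PPU(b)^-$ is a lattice.

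It remains to extend the lattice structure from $\PPU(b)^-$ to $\PPU(b)$ under the order $\le$ of \autoref{prop:PPU+_is_cone}, whose restriction to the cone is the opposite of right-divisibility. I would use right-invariance together with two observations. First, $\PPU(b)^-$ is a down-set: if $g\le\eta$ with $\eta\in\PPU(b)^-$, then $g\le\eta\le e$, so $g\in\PPU(b)^-$; consequently all common lower bounds of two cone elements lie in the cone, and their meet computed in the cone is already their meet in $\PPU(b)$. Second, for each $m\le 0$ the scalar $t^m\cdot 1$ is a pure paraunitary element of the negative cone, and multiplying by it lowers $t$-degrees; hence for any $\varphi,\psi\in\PPU(b)$ there is $\chi = t^m\cdot 1$ with $\varphi\chi,\psi\chi\in\PPU(b)^-$, and by right-invariance $\varphi\wedge\psi = \bigl((\varphi\chi)\wedge(\psi\chi)\bigr)\chi^{-1}$ exists. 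Thus all meets exist. Joins then follow: for cone elements $a,b$ with cone-join $j$ and any upper bound $c$ in $\PPU(b)$, the meet $c\wedge e$ lies in the cone and dominates $a,b$, so $j\le c\wedge e\le c$, proving $j$ is the group-join; a final shift by a suitable $\chi$ produces arbitrary joins. Hence $(\PPU(b),\le)$ is a lattice.
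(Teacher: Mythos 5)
Your overall strategy coincides with the paper's: reduce everything to showing that $\Omega(\varphi)=\ker\bar\varphi$ is an isomorphism from $\PPU(b)^-$ with the right-divisibility order onto $\Sub(V^{\oplus})$, and then transport the lattice structure to all of $\PPU(b)$. Your treatment of monotonicity, order-reflection and injectivity is correct and in fact more economical than the paper's: you use the elementary equivalence $\eta(V\polym)\subseteq V\polym \Leftrightarrow \eta\in M_n(k\polym)$ together with $\varphi^{-1}=\varphi^{\prime}$ and \eqref{eq:adjoint_b_tilde2}, whereas the paper routes the corresponding steps through an orthogonal decomposition $V\laur=\varphi^{-1}(tV\polyp)\oplus_{\tilde b_0}\varphi^{-1}(V\polym)$ with respect to the auxiliary form $\tilde b_0$ (\autoref{lem:kernel_image_trick}). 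Your closing translation argument (meets of cone elements are computed inside the cone because the cone is a down-set; arbitrary pairs are pushed into the cone by the central elements $t^m$) is also sound and is a reasonable substitute for the paper's extension of the correspondence to submodules containing some $t^mV\polym$.

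The genuine gap is the surjectivity of $\Omega$, which you correctly single out as the heart of the matter but do not prove. ``Splitting off a one-dimensional piece by an elementary paraunitary map'' needs two concrete ingredients that your sketch omits. First, the elementary maps themselves: the paper takes $p_U:=t^{-1}\pi_{U^{\ast}}+\pi_U$ for a $k$-subspace $U\subseteq V$ (a Householder-type factor; note that \emph{transvections}, which you invoke, are not available here, since an anisotropic form admits no nonzero isotropic vectors), verifies $p_U^{\prime}p_U=1$, $p_U(1)=1$, $p_U\in M_n(k\polym)$, and computes $\Omega(p_U)=tU^{\ast}$ (\autoref{lem:props_of_pv}). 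Second, and more importantly, one must guarantee that the inductive step strictly decreases $\dim_kM$: for this the paper proves that $V^1=tV$ is essential in $V^{\oplus}$ (\autoref{lem:v0_essential}), so that $W:=\{v\in V:tv\in M\}$ is nonzero whenever $M\neq 0$; since $\ker p_{W^{\ast}}=tW\subseteq M$, the image $p_{W^{\ast}}(M)$ has strictly smaller $k$-dimension, the inductive hypothesis yields $\varphi$ with $\Omega(\varphi)=p_{W^{\ast}}(M)$, and the product applying $p_{W^{\ast}}$ first and then $\varphi$ has kernel $p_{W^{\ast}}^{-1}(p_{W^{\ast}}(M))=M$. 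Without exhibiting the elementary factors, computing their kernels, and locating a degree-one element of $M$ to kill, your induction ``on the length of $M$'' does not get off the ground. Everything else in your write-up is in order.
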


\subsection*{Step I: Introducing \texorpdfstring{$\Omega$}{Omega}}~

By restriction of scalars, $V\laur$ can not only be seen as a $k\laur$-module but also as a $k\polym$-module.

$V\laur$ contains the $k\polym$-submodule $V\polym := k\polym \otimes_k V$ which, using coordinates, can be identified with the elements $v \in V\laur$ of the form
\[
v = \sum_{i = - \infty}^0 t^i v_i,
\]
i.e. those whose Laurent series has only non-positive exponents.

The definition of the $k\polyp$-submodule $V\polyp$ of $V\laur$ will be similar to the definition of $V\polym$. $V\polyp$ will play an important role later.

We now \glqq cut off\grqq\ any terms with exponents smaller or equal to zero by defining the $k\polym$-module
\[
V^{\oplus} := V\laur / V\polym.
\]

Each $v \in V^\oplus$ now has a \emph{unique} representative of the form
\[
v = \sum_{i = 1}^{\infty} t^iv_i
\]
(with only finitely many $v_i \neq 0$).

Using these expressions, multiplications by matrices with entries in $k\laur$ can be performed by first calculating the result in $V\laur$ and cutting of any terms with negative exponents in the result.

We define $\Sub(V^{\oplus})$ as the lattice of \emph{finitely generated} $k\polym$-submodules of $V^{\oplus}$, ordered by inclusion.

That $\Sub(V^{\oplus})$ is closed under finite sums is clear. That intersections are also finitely generated follows from the following lemma (that will also be crucial later):
\begin{lem} \label{lem:subs_are_findim}
Each $U \in \Sub(V^{\oplus})$ is finite-dimensional when regarded as a vector space over $k$.
\end{lem}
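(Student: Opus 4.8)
The plan is to exploit the fact that the $k\polym$-action on $V^{\oplus}$ is \emph{locally nilpotent}: multiplication by $t^{-1}$ strictly lowers the top degree of a representative, so every element of $V^{\oplus}$ is annihilated by a sufficiently high power of $t^{-1}$. Since a finitely generated $k\polym$-module is the $k\polym$-span of finitely many elements, and each such element generates only a finite-dimensional $k$-subspace, the whole module will be finite-dimensional over $k$.

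First I would describe the action explicitly on the unique representatives. For $v = \sum_{i \geq 1} t^i v_i \in V^{\oplus}$ (with finitely many $v_i$ nonzero), multiplying in $V\laur$ and then reducing modulo $V\polym$ gives
\[
t^{-1} \cdot v = \sum_{i \geq 1} t^i v_{i+1},
\]
i.e. the coordinates shift down by one and the former degree-$1$ coordinate is discarded. Consequently, if $v$ is supported in degrees $1,\dots,m$ (that is, $v_i = 0$ for $i > m$), then $(t^{-1})^m \cdot v = 0$, while $v, t^{-1}v, \dots, (t^{-1})^{m-1}v$ span a $k$-subspace of dimension at most $m$. This is the crucial step: it is elementary, but it is the whole point, since it replaces the ring $k\polym$ (which is infinite-dimensional over $k$) by a finite truncation when acting on any fixed element.

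Now let $U \in \Sub(V^{\oplus})$, say $U = \sum_{j=1}^r k\polym w_j$ for generators $w_1,\dots,w_r$, where $w_j$ is supported in degrees $1,\dots,m_j$. By the previous paragraph each cyclic submodule $k\polym w_j$ equals the $k$-span of $w_j, t^{-1}w_j, \dots, (t^{-1})^{m_j-1}w_j$, hence $\dim_k k\polym w_j \le m_j < \infty$. Therefore
\[
\dim_k U \le \sum_{j=1}^r \dim_k k\polym w_j \le \sum_{j=1}^r m_j < \infty,
\]
which is the assertion. I do not expect a genuine obstacle here; the only thing to be careful about is to carry out the reduction modulo $V\polym$ correctly, so that the truncation — and hence the nilpotency of $t^{-1}$ — becomes visible. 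Once the action is written down on representatives, the dimension bound is immediate.
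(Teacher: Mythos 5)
Your proposal is correct and rests on the same key observation as the paper's proof, namely that multiplication by $k\polym$ cannot increase the top degree of a representative in $V^{\oplus}$. The only (cosmetic) difference is the bookkeeping: you bound $\dim_k U$ by summing the dimensions of the cyclic submodules $k\polym w_j$, whereas the paper embeds $U$ into the single truncated submodule $\bigl\{\sum_{i=1}^m t^i v_i\bigr\}$ of dimension $mn$ with $m = \max_j m_j$.
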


\begin{proof}
Let $U$ be generated by the elements $v^j$ ($1 \leq j \leq l$). We can represent these by
\[
v^j = \sum_{i = 1}^{m_j} t^i v_i^j.
\]
Setting $m := \max \{m_j:1\leq j \leq l \}$ we see that all $v^j$ are contained in the $k\polym$-submodule consisting of all elements of the form
\[
v = \sum_{i=1}^m t^i v_i
\]
(note that multiplication by elements of $k\polym$ can not increase the degrees in any such expression).

Recalling that $\dim_kV = n$ we see that this submodule has $k$-dimension $mn$.
\end{proof}

Clearly, $V\polym$ can be seen as a left $M_n(k\polym)$-module in a natural way. Therefore, the elements of $\PPU(b)^-$ act $k\polym$-linearly on $V\polym$.

An important property of the left-multiplication maps is the following

\begin{prop}
For each $p \in \PPU(b)^-$, the map $v \mapsto pv$ of $V^{\oplus}$ is surjective:
\end{prop}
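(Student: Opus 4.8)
The plan is to exploit two facts about $p$: first, being an element of $\PU(b)$, it is a \emph{bijective} $k\laur$-linear endomorphism of $V\laur$, with inverse $p^{-1} = p'$ (\autoref{cor:para_matrix}); second, since $p \in M_n(k\polym)$, multiplication by $p$ carries $V\polym$ into itself, so that the map $v \mapsto pv$ descends to a well-defined endomorphism $\bar p$ of $V^{\oplus} = V\laur/V\polym$. The subtlety to keep in mind throughout is that one cannot simply invert $\bar p$ using $p^{-1}$: by \eqref{eq:adjoint_b_tilde2} the inverse $p' = \sum_i t^{-i}\varphi_i'$ lies in $M_n(k\polyp)$ and hence \emph{raises} $t$-degrees, so it does not preserve $V\polym$ and does not act on $V^{\oplus}$. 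Invertibility must therefore be used upstairs, on $V\laur$, and only then pushed down to the quotient.

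Concretely, first I would write $\pi : V\laur \to V^{\oplus}$ for the canonical projection and record that $p\,V\polym \subseteq V\polym$ makes $\bar p \circ \pi = \pi \circ p$ an unambiguous definition of $\bar p$ — this is exactly the prescription of computing $pv$ in $V\laur$ and then discarding the non-positive exponents. Surjectivity is then immediate by a factorization argument: since $p$ is a bijection of $V\laur$ we have $p(V\laur) = V\laur$, so $\pi \circ p$ is onto; as it factors as $\bar p \circ \pi$ with $\pi$ surjective, $\bar p$ must be surjective as well.

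If one prefers an argument exhibiting the preimage explicitly, I would proceed as follows. Given a class represented by $w = \sum_{i \geq 1} t^i w_i$, set $v := p^{-1}w = p'w \in V\laur$ and split it as $v = v_- + v_+$, with $v_- \in V\polym$ and $v_+$ carrying only exponents $\geq 1$. Then $pv_+ = pv - pv_- = w - pv_-$, and since $pv_- \in V\polym$ this yields $\bar p(\pi(v_+)) = \pi(pv_+) = \pi(w)$, so $w$ lies in the image. There is no real obstacle here beyond the conceptual one already flagged: the content of the statement is entirely that paraunitarity forces $p$ to be invertible on $V\laur$, and the work lies only in transferring this invertibility correctly across the quotient, being careful that it is the degree-lowering action of $p$ — not its degree-raising inverse $p'$ — that is compatible with $V^{\oplus}$.
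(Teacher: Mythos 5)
Your proof is correct and takes essentially the same route as the paper: surjectivity of $p$ on $V\laur$ (via $p^{-1}=p'$) descends through the canonical projection to $V^{\oplus}$. The extra care you take in noting that $p$, not $p'$, preserves $V\polym$ is a worthwhile clarification but not a different argument.
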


\begin{proof}
The map $v \mapsto pv$ is clearly surjective on $V\laur$ (for we have $p^{-1} = p^{\prime}$). This descends to the factor $V^{\oplus}$.
\end{proof}

We now define a map:
\begin{align*}
\Omega: \PPU(b)^- & \to \Sub(V^{\oplus}) \\
\varphi & \mapsto \ker \varphi.
\end{align*}

An immediate consequence of this definition is:

\begin{cor}
Let $\PPU(b)^-$ be equipped with the order given by right-divisibility. Then $\Omega$ is an order-preserving map.
\end{cor}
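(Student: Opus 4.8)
The statement is essentially a functoriality observation, so the plan is short. First I would fix the convention: in the monoid $\PPU(b)^-$ the right-divisibility order is $\varphi \leq \psi$ iff $\varphi$ is a right-divisor of $\psi$, i.e. $\psi = \chi\varphi$ for some $\chi \in \PPU(b)^-$. Since every element of $\PPU(b)^-$ has entries in $M_n(k\polym)$, it maps $V\polym$ into itself and hence descends to a $k\polym$-linear endomorphism of $V^{\oplus} = V\laur/V\polym$; writing $\bar\varphi$ for this induced map, one has $\Omega(\varphi) = \ker\bar\varphi$ by definition.

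The key — and only — step is to note that passing to the induced maps on $V^{\oplus}$ is multiplicative: because each factor preserves $V\polym$, the quotient maps compose, so $\overline{\chi\varphi} = \bar\chi\circ\bar\varphi$. Granting this, suppose $\varphi \leq \psi$ and write $\psi = \chi\varphi$ with $\chi \in \PPU(b)^-$. If $v \in \ker\bar\varphi$, then $\bar\psi(v) = \bar\chi(\bar\varphi(v)) = \bar\chi(0) = 0$, so $v \in \ker\bar\psi$. Hence $\ker\bar\varphi \subseteq \ker\bar\psi$, that is $\Omega(\varphi) \subseteq \Omega(\psi)$; since $\Sub(V^{\oplus})$ is ordered by inclusion, this is exactly the order-preservation claimed.

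There is no genuine obstacle here. The one point deserving attention is the orientation of the divisibility relation: it is the inner factor $\varphi$ of $\psi = \chi\varphi$ that must count as the smaller element, and this is precisely what makes the kernel inclusion run in the order-preserving rather than the order-reversing direction (the opposite convention would instead match the restriction of the negative-cone order on $\PPU(b)$ and yield an order-reversing map). I take the well-definedness of $\Omega$ — that $\ker\bar\varphi$ actually lies in $\Sub(V^{\oplus})$ — as already recorded with its definition; it follows since $\ker\bar\varphi = \varphi^{-1}(V\polym)/V\polym = \varphi'(V\polym)/V\polym$ is carved out in bounded degree by the Laurent-polynomial matrix $\varphi' = \varphi^{-1} \in \PPU(b)^+$, hence is finite-dimensional over $k$ and a fortiori finitely generated over $k\polym$ (compare \autoref{lem:subs_are_findim}).
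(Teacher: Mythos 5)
Your proof is correct and is essentially the paper's own argument: the paper's entire proof is the one-line observation $\Omega(\psi\varphi) = \ker(\psi\varphi) \supseteq \ker(\varphi) = \Omega(\varphi)$, which is exactly your kernel-inclusion step after writing $\psi = \chi\varphi$. The extra remarks on the orientation of the divisibility order and on well-definedness of $\Omega$ are accurate but not part of the paper's proof of this corollary.
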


\begin{proof}
This follows from the observation that
\[
\Omega(\psi \varphi) = \ker(\psi \varphi) \supseteq \ker(\varphi) = \Omega(\varphi).
\]
\end{proof}

In the following sections, we will prove something stronger, i.e.
\begin{thm} \label{thm:omega_is_iso}
$\Omega$ is an isomorphism of posets. In particular, $\PPU(b)^-$ is a modular lattice with respect to right divisibility.
\end{thm}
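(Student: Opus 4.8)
The plan is to reformulate $\Omega$ in terms of \emph{lattices} inside $V\laur$. For $\varphi\in\PPU(b)^-$ one has $\varphi^{-1}=\varphi'$ with entries in $k\polyp$, so left-multiplication by $\varphi^{-1}$ maps $V\polym$ into $t^D V\polym$ where $D$ is the largest exponent occurring in $\varphi'$, while $\varphi V\polym\subseteq V\polym$ forces $V\polym\subseteq\varphi^{-1}(V\polym)$. Thus $L_\varphi:=\varphi^{-1}(V\polym)=\varphi'(V\polym)$ is a finitely generated $k\polym$-lattice with $V\polym\subseteq L_\varphi\subseteq t^D V\polym$, of finite colength. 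Unwinding the definitions, $\bar v\in\ker\varphi$ iff $\varphi v\in V\polym$ iff $v\in L_\varphi$, so $\Omega(\varphi)=L_\varphi/V\polym$, which is finite-dimensional by \autoref{lem:subs_are_findim} (so $\Omega$ genuinely lands in $\Sub(V^{\oplus})$) and makes the inclusion order on kernels correspond to inclusion of the lattices $L_\varphi$.

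Using this description, injectivity and order-reflection fall out together. I would suppose $L_\varphi\subseteq L_\psi$ and set $\chi:=\psi\varphi^{-1}\in\PPU(b)$ (it lies in $\PPU(b)$ since $\varepsilon_1(\chi)=\varepsilon_1(\psi)\varepsilon_1(\varphi)^{-1}=1$). Because $\psi'=\psi^{-1}$ one has $\psi(L_\psi)=\psi\psi'(V\polym)=V\polym$, so
\[
\chi(V\polym)=\psi\bigl(\varphi'(V\polym)\bigr)=\psi(L_\varphi)\subseteq\psi(L_\psi)=V\polym.
\]
Hence $\chi\in\PPU(b)^-$ and $\psi=\chi\varphi$, i.e. $\varphi\le\psi$ under right-divisibility, which is exactly order-reflection. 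Specializing to $L_\varphi=L_\psi$ gives $\chi,\chi^{-1}\in\PPU(b)^-$, and the argument of \autoref{prop:PPU+_is_cone} (applied to the negative cone) forces $\chi=1$, so $\varphi=\psi$. Combined with the order-preservation already recorded, this shows $\Omega$ is an order-embedding.

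The hard part will be \textbf{surjectivity}. Given $U\in\Sub(V^{\oplus})$, let $V\polym\subseteq L\subseteq V\laur$ be its preimage; by \autoref{lem:subs_are_findim} the colength $d:=\dim_k(L/V\polym)$ is finite, and I would induct on $d$. For $d=0$ take $\varphi=1$. If $d\ge 1$, choose a nonzero element of $L/V\polym$ killed by $t^{-1}$; lifting and subtracting its part in $V\polym\subseteq L$, it is represented by $tv_1\in L$ with $v_1\in V\setminus\{0\}$. Let $P$ be the $b$-orthogonal projection of $V$ onto $kv_1$ (well-defined as $b(v_1,v_1)\ne 0$), extended $k\laur$-linearly; it is idempotent and self-adjoint, $P'=P$. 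The \emph{transvection} $g:=1+(t-1)P$ satisfies $g'g=1$ by the identity $(t-1)+(t^{-1}-1)+(t^{-1}-1)(t-1)=0$, has entries in $k\polyp$ and fixes $1$, so $g\in\PPU(b)^+$; a direct check shows $g(V\polym)=V\polym+k\,tv_1\subseteq L$. Hence $\varphi_1:=g^{-1}\in\PPU(b)^-$ maps this sublattice onto $V\polym$, so $L':=\varphi_1(L)$ contains $V\polym$ with colength $d-1$. By induction there is $\psi\in\PPU(b)^-$ with $\psi(L')=V\polym$, and then $\varphi:=\psi\varphi_1\in\PPU(b)^-$ satisfies $\varphi^{-1}(V\polym)=L$, i.e. $\Omega(\varphi)=U$. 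The genuine obstacle here is isolating the right elementary move: finding a paraunitary map that enlarges the lattice by exactly one socle dimension, which is what the transvection $g$ accomplishes.

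Thus $\Omega$ is a bijective order-embedding, hence an isomorphism of posets. Finally, $\Sub(V^{\oplus})$ is closed under sums and (again by \autoref{lem:subs_are_findim}) under intersections, so it is a sublattice of the lattice of all $k\polym$-submodules of $V^{\oplus}$; the latter is modular, as the submodule lattice of any module always is. Transporting this structure along $\Omega$ shows that $\PPU(b)^-$ is a modular lattice under right-divisibility.
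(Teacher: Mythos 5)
Your proof is correct, and the two halves relate to the paper's argument in different ways. For surjectivity you have essentially rediscovered the paper's induction: your transvection $g = 1+(t-1)P$ is precisely $p_{(kv_1)^{\ast}}^{-1}$ for the element $p_U = t^{-1}\pi_{U^{\ast}}+\pi_U$ that the paper introduces, and your socle element $tv_1$ is the content of \autoref{lem:v0_essential}; the only difference is that you peel off one $k$-dimension at a time, where the paper contracts by the full subspace $W=\{v \in V : tv \in U\}$ in a single step. For injectivity and order-reflection, however, you take a genuinely different and arguably more economical route. The paper first establishes the decomposition $V\laur = \pi^{-1}(\Omega(\varphi))\oplus_{\tilde{b}_0}\varphi^{-1}tV\polyp$ with respect to the auxiliary form $\tilde{b}_0$ (its Step II together with \autoref{lem:kernel_image_trick}), reads off injectivity from it, and then obtains order-reflection only a posteriori by combining surjectivity with injectivity. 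You instead observe that $\Omega(\varphi)=\varphi^{-1}(V\polym)/V\polym$ and that $\chi(V\polym)\subseteq V\polym$ already characterizes $\chi\in M_n(k\polym)$, which yields order-reflection directly — with injectivity as the special case $L_\varphi=L_\psi$ — without the form $\tilde{b}_0$ and without invoking surjectivity at all. What the paper's route buys is the orthogonal-decomposition lemma itself, which carries more information than the bare order statement; what yours buys is that Step II becomes unnecessary for this theorem. You also supply the modularity claim explicitly, which the paper leaves implicit. One cosmetic point: the finite generation of $L_\varphi/V\polym$ follows from the inclusion $L_\varphi\subseteq t^{D}V\polym$ by the \emph{argument} of \autoref{lem:subs_are_findim} (applied to $t^{D}V\polym/V\polym$), not by its statement, which presupposes finite generation.
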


\subsection*{Step II: A \texorpdfstring{$k$}{k}-bilinear form on \texorpdfstring{$V\laur$}{V[t,t -1]}}~

For $v,w \in V\laur$, we define the symmetric $k$-bilinear form
\[
\tilde{b}_0(v,w) := \sum_{i = -\infty}^{\infty}b(v_i,w_i)
\]
which is the $0$-th coefficient of $\tilde{b}(v,w)$.

We have the orthogonal decomposition\footnote{Caution is demanded for we do \emph{not} have that all $k$-subspaces $U$ have an orthogonal complement with respect to $\tilde{b}_0$ because $\tilde{b}_0$ is not always anisotropic, despite $\tilde{b}$ being anisotropic!}
\[
V\laur = t V\polyp \oplus_{\tilde{b}_0} V\polym
\]
(the subindex meaning that orthogonality is considered with respect to $\tilde{b}_0$).

Note that $V\polym$ \emph{is} indeed the orthogonal complement of $t V\polyp$ with respect to $\tilde{b}_0$, i.e. there are not more orthogonal elements: for any $v \in V\laur$ with $v_i \neq 0$ for some $i > 0$ we have $\tilde{b}_0(t^iv_i,v) = b(v_i,v_i) \neq 0$, while $t^iv_i \in t V\polyp$.

$\tilde{b}_0$ is furthermore preserved by any $\varphi \in \PPU(b)$ (for those maps preserve \emph{all} coefficients of $\tilde{b}$!). Thus we have proved that
\begin{lem} \label{lem:decomp_of_images}
For all $\varphi \in \PPU(b)$ we have
\[
V\laur = \varphi (tV\polyp) \oplus_{\tilde{b}_0} \varphi(V\polym).
\]
\end{lem}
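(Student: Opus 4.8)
The plan is to obtain the decomposition by transporting the already-established orthogonal splitting $V\laur = tV\polyp \oplus_{\tilde{b}_0} V\polym$ along the automorphism $\varphi$, keeping in mind throughout that the directness of the sum and the $\tilde{b}_0$-orthogonality rest on two genuinely different properties of $\varphi$ and must therefore be argued separately.

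First I would record that every $\varphi \in \PPU(b)$ is an invertible $k\laur$-linear endomorphism of $V\laur$, hence in particular a $k$-linear automorphism. Applying a bijective $k$-linear map to an internal direct-sum decomposition produces another one: from $V\laur = tV\polyp \oplus V\polym$ and surjectivity of $\varphi$ I get $V\laur = \varphi(V\laur) = \varphi(tV\polyp) + \varphi(V\polym)$, while injectivity forces $\varphi(tV\polyp) \cap \varphi(V\polym) = \varphi\bigl(tV\polyp \cap V\polym\bigr) = 0$, so the sum is direct.

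It then remains to verify $\tilde{b}_0$-orthogonality. Since $\varphi \in \PPU(b) \subseteq \PU(b)$, equation \eqref{defi:para_group} gives $\tilde{b}(\varphi v, \varphi w) = \tilde{b}(v,w)$ in $k\laur$ for all $v,w \in V\laur$; reading off the $0$-th coefficient of both sides yields $\tilde{b}_0(\varphi v, \varphi w) = \tilde{b}_0(v,w)$, which is exactly the preservation of $\tilde{b}_0$ recorded just before the lemma. Hence for $v \in tV\polyp$ and $w \in V\polym$, where $\tilde{b}_0(v,w) = 0$ by the orthogonality of the original splitting, I obtain $\tilde{b}_0(\varphi v, \varphi w) = 0$, so that $\varphi(tV\polyp)$ and $\varphi(V\polym)$ are $\tilde{b}_0$-orthogonal.

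The only conceptual point worth stressing, and the reason the two halves cannot be fused into a single orthogonality argument, is that $\tilde{b}_0$ is \emph{not} anisotropic; orthogonality alone would therefore not entail directness. The directness comes solely from the bijectivity of $\varphi$, whereas the orthogonality comes solely from $\varphi$ preserving $\tilde{b}_0$. Once these are kept apart no genuine obstacle arises, and the proof reduces to these two short observations.
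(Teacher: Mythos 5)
Your proof is correct and follows essentially the same route as the paper, which likewise obtains the statement by transporting the splitting $V\laur = tV\polyp \oplus_{\tilde{b}_0} V\polym$ along $\varphi$, using that every $\varphi \in \PPU(b)$ preserves all coefficients of $\tilde{b}$ and hence $\tilde{b}_0$. Your explicit separation of the directness (from bijectivity of $\varphi$) and the orthogonality (from $\tilde{b}_0$-invariance) is a slightly more careful write-up of the argument the paper leaves implicit.
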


\subsection*{Step III: \texorpdfstring{$\Omega$}{Omega} is surjective}~

We define the $k\polym$-submodule
\[
V^1 := \left\{tv: v\in V \right\} \subseteq V^{\oplus}.
\]
It turns out that this module is essential in $V^{\oplus}$:
\begin{lem} \label{lem:v0_essential}
If $0 \neq U \subseteq V^{\oplus}$ is a $k\polym$-submodule then $U \cap V^1 \neq 0$.
\end{lem}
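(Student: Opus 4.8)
The plan is to exploit the degree structure of $V^{\oplus}$: every nonzero class has a well-defined top degree, and multiplying by a suitable power of $t^{-1}$ should shift that top term down to degree $1$ while annihilating everything below it, thereby landing inside $V^1$. The point to keep in mind is that $t^{-1}$ acts as a \emph{lowering} operator on the canonical representatives $\sum_{i=1}^{\infty} t^i v_i$, and that any term whose exponent drops to $0$ or below is erased upon passing to $V^{\oplus} = V\laur/V\polym$.

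First I would pick any nonzero $u \in U$ and write its canonical representative as $u = \sum_{i=1}^{m} t^i u_i$ with $u_m \neq 0$, where $m \geq 1$ is the largest exponent occurring. Since $U$ is a $k\polym$-submodule, it is closed under multiplication by $t^{-(m-1)} = (t^{-1})^{m-1} \in k\polym$, so $t^{-(m-1)} u \in U$. The key computation is then to evaluate this product in $V^{\oplus}$: in $V\laur$ one has $t^{-(m-1)} u = \sum_{i=1}^m t^{\,i-m+1} u_i$, whose exponents range from $2-m$ up to $1$. Passing to $V^{\oplus}$ cuts off every summand of exponent $\leq 0$, so only the $i = m$ term survives, giving $t^{-(m-1)} u = t\,u_m \in V^1$. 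As $u_m \neq 0$, this is a nonzero element of $U \cap V^1$, which proves the claim.

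The only point demanding care — and the nearest thing to an obstacle — is the bookkeeping of which terms survive the cutoff. One must use the maximality of $m$ to see that every coefficient with index $i < m$ is shifted to an exponent strictly below $1$ and hence erased, so that the surviving part is genuinely the single vector $t\,u_m$ with no possibility of cancellation against lower terms. Once this is checked the argument is complete; no appeal to anisotropy or to the paraunitary structure is needed here, only the module structure of $V^{\oplus}$ over $k\polym$.
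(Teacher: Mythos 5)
Your argument is correct and is exactly the paper's proof: take a nonzero $u \in U$ with top term $t^m u_m$, multiply by $t^{-(m-1)} \in k\polym$, and observe that all lower terms are killed in $V^{\oplus}$, leaving $t\,u_m \neq 0$ in $U \cap V^1$. Nothing further is needed.
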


\begin{proof}
Let $0 \neq v \in U$ and write
\[
v = \sum_{i=1}^m t^iv_i
\]
where $v_m \neq 0$. Then
\[
0 \neq t v_m = t^{-m+1}v \in U \cap V^1.
\]
\end{proof}

For any $k$-subspace $U \subseteq V$ we define the orthogonal projection onto $U$ by:
\begin{align*}
\pi_U: V = U \oplus U^{\ast} & \to V \\
u + u^{\prime} & \mapsto u \\ 
\end{align*}
where $U^{\ast} = \{ v \in V:b(u,v) = 0 \, \forall u \in U \}$. Note that we have $U \oplus U^{\ast} = V$ for all subspaces $U$ because $b$ is anisotropic. Additionally, $U^{\ast \ast} = U$.

It is clear that we have $\pi_U^2 = \pi_U$. Furthermore, $\pi_U$ is self-adjoint with respect to $b$, for if we write $u = u_1 + u_2$ and $v = v_1 + v_2$ with $u_1,v_1 \in U$, $u_2,v_2 \in U^{\ast}$, we have
\[
b(\pi_U(u),v) = b(u_1,v_1+v_2) = b(u_1,v_1) = b(u_1+u_2,v_1) = b(u,\pi_U(v)),
\]
i.e. $\pi_U^{\prime} = \pi_U$.

We collect - without proof - some well-known facts on these projection operators:

\begin{lem} \label{lem:identities_for_projections}
Let $U,W \subseteq V$ be $k$-subspaces. Then the following identities hold:
\begin{enumerate}[a)]
\item $\pi_{U \oplus W} = \pi_U + \pi_W$ whenever $U \bot W $. Particularly, $\pi_{U} + \pi_{U^{\ast}} = 1$.
\item $\pi_{U \sqcap W} = \pi_U \pi_V$ whenever $U \top W$.\footnote{But it is well-known that this condition is not necessary for $\pi_{U \cap W} = \pi_U \pi_W$ to hold.}
\item We have $\pi_U \pi_W = 0$ exactly when $U \bot W$.
\end{enumerate}
\end{lem}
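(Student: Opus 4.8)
The plan is to treat the three identities in the order c), a), b), since b) will follow formally once a) and c) are in hand. The one substantive geometric input throughout is the anisotropy of $b$: it guarantees that the restriction of $b$ to every subspace is again non-degenerate, so that orthogonal complements split off cleanly, exactly as exploited in the proof of \autoref{prop:Xb_is_oml}.

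For c) I would argue purely in terms of images and kernels. Since $\pi_W$ is surjective onto $W$ and $\ker \pi_U = U^{\ast}$, the equation $\pi_U \pi_W = 0$ says precisely that $\pi_W(v) \in U^{\ast}$ for every $v$, i.e. $W \subseteq U^{\ast}$, which is the definition of $U \bot W$; conversely $W \subseteq U^{\ast}$ gives $\operatorname{im} \pi_W = W \subseteq \ker \pi_U$, so the composite vanishes. For a) I would first note that $U \bot W$ forces $U \cap W \subseteq U \cap U^{\ast} = 0$, so the join really is the internal direct sum $U \oplus W$, and then check $\pi_{U \oplus W} = \pi_U + \pi_W$ on the decomposition $V = U \oplus W \oplus (U \oplus W)^{\ast}$, available by anisotropy, using $(U \oplus W)^{\ast} = U^{\ast} \cap W^{\ast}$. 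On $U$ both sides act as the identity (here $U \subseteq W^{\ast}$, so $\pi_W$ kills $U$); symmetrically on $W$; and both sides vanish on $U^{\ast} \cap W^{\ast}$. The special case $\pi_U + \pi_{U^{\ast}} = 1$ is then the instance $W = U^{\ast}$, where $U \oplus U^{\ast} = V$ and $\pi_V = \operatorname{id}$.

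For b) the key step, and the one I expect to carry the real content, is to convert the hypothesis $U \top W$, i.e. $U^{\ast} \subseteq W$, into an orthogonal splitting of $W$. Restricting $b$ to $W$ (still anisotropic, hence non-degenerate) and complementing $U^{\ast}$ inside $W$, one obtains $W = U^{\ast} \oplus_{b} (U \cap W)$ as an orthogonal direct sum. Writing $A := U \cap W$, part a) gives $\pi_W = \pi_{U^{\ast}} + \pi_A$. Multiplying on the left by $\pi_U$ and invoking part c) (since $U \bot U^{\ast}$) yields $\pi_U \pi_{U^{\ast}} = 0$, while $\operatorname{im} \pi_A = A \subseteq U$ forces $\pi_U \pi_A = \pi_A$. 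Hence $\pi_U \pi_W = \pi_A = \pi_{U \cap W}$, as required.

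The main obstacle is thus concentrated in that last splitting: identifying the orthogonal complement of $U^{\ast}$ within $W$ as precisely $U \cap W$. This rests on $U^{\ast \ast} = U$ and, once more, on anisotropy guaranteeing that the complement exists inside $W$ at all; everything else is a formal consequence of a), c), and the image/kernel behaviour of the projections.
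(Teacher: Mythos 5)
Your proposal is correct. Note that the paper itself offers no proof of this lemma --- it is explicitly stated as a collection of well-known facts ``without proof'' --- so there is no argument of the author's to compare yours against. Your derivation is a sound way to supply one: part c) is the standard image/kernel characterization ($\pi_U\pi_W=0$ iff $W=\operatorname{im}\pi_W\subseteq\ker\pi_U=U^{\ast}$); part a) is verified correctly on the decomposition $V=U\oplus W\oplus(U^{\ast}\cap W^{\ast})$, which is legitimate since $U\bot W$ forces $U\cap W\subseteq U\cap U^{\ast}=0$ and anisotropy splits off $(U\oplus W)^{\ast}=U^{\ast}\cap W^{\ast}$; and part b) reduces to a) and c) via the orthogonal splitting $W=U^{\ast}\oplus_b(U\cap W)$, which is exactly the splitting already established in the proof of \autoref{prop:Xb_is_oml} (there in the form $W=U'\oplus(U'^{\ast}\cap W)$ for $U'\subseteq W$, applied here to $U'=U^{\ast}$ together with $U^{\ast\ast}=U$). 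The one point worth making explicit, which you do handle, is that a) is applied in b) to the pair $U^{\ast}$, $U\cap W$, so you need $U^{\ast}\bot(U\cap W)$; this follows from $U\cap W\subseteq U=U^{\ast\ast}$.
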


We now set
\[
p_U := t^{-1}\pi_{U^{\ast}} + \pi_U. \label{definition_of_pv}
\]

\begin{lem} \label{lem:props_of_pv}
We have the following properties:
\begin{enumerate}[i)]
\item $p_U \in \PPU(b)^-$ for all $k$-subspaces $U\subseteq V$,
\item $\Omega(p_U) = \{ tv: v \in U^{\ast} \}$.
\end{enumerate}
\end{lem}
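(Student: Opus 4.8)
The plan is to treat i) by checking the three conditions that define $\PPU(b)^-$ — lying in the negative monoid $M_n(k\polym)$, being pure (i.e. in $\ker \varepsilon_1$), and being paraunitary — and then to treat ii) by a direct kernel computation on $V^{\oplus}$. The negativity is immediate, since only the exponents $-1$ and $0$ occur in $p_U = t^{-1}\pi_{U^{\ast}} + \pi_U$. Purity follows by specializing at $t = 1$: using $\pi_U + \pi_{U^{\ast}} = 1$ from \autoref{lem:identities_for_projections}a) one gets $p_U(1) = \pi_{U^{\ast}} + \pi_U = 1$, so $\varepsilon_1(p_U) = 1$. The only substantial point is paraunitarity, i.e. $p_U^{\prime} p_U = 1$ (\autoref{cor:para_matrix}). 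Reading off the Laurent coefficients $\varphi_{-1} = \pi_{U^{\ast}}$, $\varphi_0 = \pi_U$ and applying the adjoint formula \eqref{eq:adjoint_b_tilde2} together with the self-adjointness $\pi_U^{\prime} = \pi_U$, $\pi_{U^{\ast}}^{\prime} = \pi_{U^{\ast}}$, one finds $p_U^{\prime} = t\pi_{U^{\ast}} + \pi_U$. Expanding $p_U^{\prime} p_U$ produces four terms; idempotence $\pi_U^2 = \pi_U$, $\pi_{U^{\ast}}^2 = \pi_{U^{\ast}}$ disposes of the diagonal ones, while the cross terms vanish because $\pi_U\pi_{U^{\ast}} = \pi_{U^{\ast}}\pi_U = 0$ (\autoref{lem:identities_for_projections}c), as $U \bot U^{\ast}$), leaving exactly $\pi_{U^{\ast}} + \pi_U = 1$.

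For ii) I would compute the action of $p_U$ on a representative $v = \sum_{i\geq 1} t^i v_i \in V^{\oplus}$ and then truncate the terms of non-positive exponent, as the module structure on $V^{\oplus}$ demands. The summand $\pi_U$ preserves exponents, whereas $t^{-1}\pi_{U^{\ast}}$ shifts each exponent down by one, so the $i = 1$ contribution of the shifted part lands in degree $0$ and is discarded into $V\polym$. Collecting the survivors, the coefficient of $t^j$ (for $j \geq 1$) in $p_U v$ equals $\pi_U(v_j) + \pi_{U^{\ast}}(v_{j+1})$. Since $V = U \oplus U^{\ast}$ is an orthogonal direct sum and these two contributions lie in $U$ and $U^{\ast}$ respectively, the coefficient vanishes if and only if $\pi_U(v_j) = 0$ and $\pi_{U^{\ast}}(v_{j+1}) = 0$ for every $j \geq 1$, i.e. $v_j \in U^{\ast}$ for all $j \geq 1$ and $v_i \in U$ for all $i \geq 2$. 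Intersecting these forces $v_i \in U \cap U^{\ast} = 0$ for $i \geq 2$ and leaves $v_1 \in U^{\ast}$, so $\ker p_U = \{tv : v \in U^{\ast}\}$, as claimed.

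The calculations are routine; the one place demanding care is the bookkeeping in ii) about how the $t^{-1}$-shift interacts with the truncation defining multiplication on $V^{\oplus}$. Concretely, the $j = 1$ coordinate of $v$ feeds into $p_U v$ only through $\pi_U(v_1)$, because the shifted part $\pi_{U^{\ast}}(v_1)$ has already dropped to degree $0$ and been cut off. This asymmetry between the lowest coordinate and the rest is precisely what singles out $v_1 \in U^{\ast}$ as the one surviving free coordinate, and it is the only subtlety worth checking carefully.
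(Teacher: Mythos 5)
Your proposal is correct and follows essentially the same route as the paper: part i) by the computation $p_U^{\prime}p_U = 1$ together with $p_U(1)=1$ and the visible negativity of the exponents, and part ii) by reading off the $t^j$-coefficient $\pi_U(v_j)+\pi_{U^{\ast}}(v_{j+1})$ of $p_Uv$ after truncation and using $U\cap U^{\ast}=0$. Your ``if and only if'' phrasing cleanly packages both inclusions that the paper checks separately; no gap.
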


\begin{proof}
\begin{enumerate}[i)]
\item We calculate
\begin{align*}
p_U^{\prime} p_U & = (t\pi_{U^{\ast}}^{\prime} + \pi_U^{\prime})(t^{-1}\pi_{U^{\ast}} + \pi_U) \\
& = (t\pi_{U^{\ast}} + \pi_U)(t^{-1}\pi_{U^{\ast}} + \pi_U) \\
& = t\underbrace{\pi_{U^{\ast}}\pi_U}_{=0} + \pi_{U^{\ast}}^2 + \pi_U^2 + t^{-1} \underbrace{\pi_U \pi_{U^{\ast}}}_{=0} \\
& = \pi_{U^{\ast}} + \pi_U = 1.
\end{align*}
and $p_U(1) = \pi_{U^{\ast}} + \pi_U = 1$. Therefore, $p_U \in PPU(b)^-$.
\item Let $v \in V^{\oplus}$ fulfil $p_Uv = 0$. Let this element be represented as
\[
v = \sum_{i=1}^{\infty} t^i v_i.
\]
Then one calculates that $p_Uv$ is represented by
\[
p_Uv = \sum_{i=1}^{\infty} t^i(\pi_U(v_i) + \pi_{U^{\ast}}(v_{i+1}))
\]
ignoring the summand $t^0 \pi_{U^{\ast}}(v_1)$ which is in $V\polym$.

We need to have $\pi_U(v_i) + \pi_{U^{\ast}}(v_{i+1}) = 0$ for all $i \geq 1$.

For we have $U \cap U^{\ast} = 0$, this implies $\pi_U(v_i) = 0$ and $\pi_{U^{\ast}}(v_{i+1})$. We infer that $v_i \in U^{\ast}$ and $v_{i+1} \in U^{\ast \ast} = U$ for $i \geq 1$.

But then $v_i \in U \cap U^{\ast}$ must hold for $i \geq 2$, i.e. $v_i = 0$.

We have therefore shown that $v_1 \in U^{\ast}$ and $v_i = 0$ for all $i > 1$. Therefore $\Omega(p_U) \subseteq \{tv:v \in U^{\ast} \}$.

On the other hand, if $v \in U^{\ast}$, one calculates easily that $p_U \cdot tv = t^0 v_1$ which is zero in $V^{\oplus}$, thus showing the other inclusion.
\end{enumerate}
\end{proof}

We are now able to prove the following
\begin{prop} \label{prop:omega_is_sur}
$\Omega$ is surjective.
\end{prop}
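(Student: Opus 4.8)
The plan is to prove surjectivity by induction on $\dim_k U$, which is finite for every $U \in \Sub(V^{\oplus})$ by \autoref{lem:subs_are_findim}. The base case $U = 0$ is immediate, since $0 = \ker(\mathrm{id}) = \Omega(1)$ with $1 \in \PPU(b)^-$. For the inductive step the idea is to peel off a nonzero piece of $U$ using one of the elementary elements $p_W$, reduce to a submodule of strictly smaller $k$-dimension, and then recombine by a single right-multiplication.

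Concretely, suppose $U \neq 0$. By essentiality of $V^1$ (\autoref{lem:v0_essential}) the intersection $U \cap V^1$ is nonzero; since $v \mapsto tv$ identifies $V$ with $V^1$ and $t^{-1}$ annihilates $V^1$, this intersection has the form $\{tw : w \in W\}$ for a unique nonzero subspace $W \subseteq V$. Taking the subspace $W^{\ast} \subseteq V$, \autoref{lem:props_of_pv} gives $p_{W^{\ast}} \in \PPU(b)^-$ with $\ker p_{W^{\ast}} = \Omega(p_{W^{\ast}}) = \{tv : v \in W^{\ast\ast}\} = \{tw : w \in W\} \subseteq U$. Because every element of $\PPU(b)^-$ acts surjectively and $k\polym$-linearly on $V^{\oplus}$, the image $p_{W^{\ast}}(U)$ is again a finitely generated $k\polym$-submodule, and restricting $p_{W^{\ast}}$ to $U$ yields $\dim_k p_{W^{\ast}}(U) = \dim_k U - \dim_k W < \dim_k U$, the kernel of the restriction being exactly $U \cap \ker p_{W^{\ast}} = \{tw : w \in W\}$.

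By the induction hypothesis there is $\psi \in \PPU(b)^-$ with $\ker\psi = p_{W^{\ast}}(U)$. Setting $\varphi := \psi\, p_{W^{\ast}} \in \PPU(b)^-$ and using that left multiplication corresponds to composition of the induced maps on $V^{\oplus}$, one computes $\ker\varphi = p_{W^{\ast}}^{-1}(\ker\psi) = p_{W^{\ast}}^{-1}(p_{W^{\ast}}(U)) = U + \ker p_{W^{\ast}} = U$, where the last equality uses $\ker p_{W^{\ast}} \subseteq U$. Hence $\Omega(\varphi) = U$, completing the induction.

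I expect the only delicate points to be bookkeeping rather than genuine obstacles: verifying that $p_{W^{\ast}}(U)$ is genuinely a finitely generated $k\polym$-submodule (so that the inductive hypothesis applies), and justifying the two kernel identities $\ker(\psi\, p_{W^{\ast}}) = p_{W^{\ast}}^{-1}(\ker\psi)$ and $p_{W^{\ast}}^{-1}(p_{W^{\ast}}(U)) = U + \ker p_{W^{\ast}}$. Both rest on $p_{W^{\ast}}$ being a surjective $k\polym$-linear self-map of $V^{\oplus}$, which is already available. The essentiality of $V^1$ is what guarantees $W \neq 0$, hence that the dimension strictly drops, so it is the one ingredient without which the induction would stall.
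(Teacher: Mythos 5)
Your proof is correct and follows essentially the same route as the paper: induction on $\dim_k U$, using the essentiality of $V^1$ to extract a nonzero $W$ with $tW = U \cap V^1$, peeling it off with $p_{W^{\ast}}$, and recombining via the induction hypothesis. If anything, your version is slightly more careful on the bookkeeping (the rank--nullity count, the identity $p_{W^{\ast}}^{-1}(p_{W^{\ast}}(U)) = U + \ker p_{W^{\ast}}$, and the order of composition in $\ker(\psi\, p_{W^{\ast}}) = p_{W^{\ast}}^{-1}(\ker\psi)$) than the paper's own write-up.
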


\begin{proof}
Given $U \in \Sub(V^{\oplus})$, we must find a $\varphi \in \PPU(b)$ with $\ker \varphi = U$. This will be done by induction over $m := \dim_kU$ (which is possible because of \autoref{lem:subs_are_findim}).

For $m = 0$, $\varphi$ can be chosen to be the identity.

If $m > 0$, by \autoref{lem:v0_essential}, $U \cap V^1 \neq 0$. Therefore, $W := \{v \in V:tw \in U \} \neq 0$. Now assume that we have shown that each submodule of dimension smaller than $m$ is the kernel of some $\varphi \in \PPU(b)^-$.

We have $\ker(p_{W^{\ast}}) = \ker(p_{W^{\ast}}) \cap U = tW^{\ast \ast} = tW$. Therefore, $\dim_k \left(p_{W^{\ast}}(U) \right) < m$.

Let $\varphi \in \PPU(b)^-$ be such that $\ker(\varphi) = p_{W^{\ast}}(U)$. Then
\[
\ker(p_{W^{\ast}} \circ \varphi) = p_{W^{\ast}}^{-1} \left( \ker \varphi \right) = p_{W^{\ast}}^{-1}\left(p_{W^{\ast}}(U)\right) = U
\]
where the last equality follows from \autoref{lem:props_of_pv}.
\end{proof}

\subsection*{Step IV: \texorpdfstring{$\Omega$}{Omega} is injective}~

In the following lemma, we denote by $\pi: V\laur \to V^{\oplus}$ the canonical projection map.

\begin{lem} \label{lem:kernel_image_trick}
For $\varphi \in \PPU(b)^-$ holds
\[
V\laur = \pi^{-1}(\Omega(\varphi)) \oplus_{\tilde{b}_0} \varphi^{-1} t V\polyp.
\]
\end{lem}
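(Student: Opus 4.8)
The plan is to reduce the statement to \autoref{lem:decomp_of_images} applied to $\varphi^{-1}$, the whole point being that the fibre $\pi^{-1}(\Omega(\varphi))$ is nothing but the image of $V\polym$ under $\varphi^{-1}$. First I would record why $\varphi$ descends to the quotient at all: since $\varphi \in \PPU(b)^-$ has all its Laurent coefficients in $k\polym$, multiplication by $\varphi$ cannot raise exponents and therefore maps $V\polym$ into itself. Hence $\varphi$ induces a $k\polym$-linear endomorphism $\bar{\varphi}$ of $V^{\oplus} = V\laur/V\polym$ with $\pi \circ \varphi = \bar{\varphi} \circ \pi$, and by definition $\Omega(\varphi) = \ker \bar{\varphi}$. (It is here that one genuinely uses $\varphi \in \PPU(b)^-$ rather than just $\varphi \in \PPU(b)$.)

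Next I would identify the preimage explicitly. For $v \in V\laur$ one has the chain of equivalences: $\pi(v) \in \Omega(\varphi)$ iff $\bar{\varphi}(\pi(v)) = 0$ iff $\pi(\varphi v) = 0$ iff $\varphi v \in V\polym$. Since $\varphi$ is invertible, the last condition says precisely that $v$ lies in the image of $V\polym$ under $\varphi^{-1}$, so that
\[
\pi^{-1}(\Omega(\varphi)) = \varphi^{-1}(V\polym).
\]

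Finally, because $\PPU(b)$ is a group we have $\varphi^{-1} \in \PPU(b)$, so \autoref{lem:decomp_of_images} may be invoked for it, giving
\[
V\laur = \varphi^{-1}(tV\polyp) \oplus_{\tilde{b}_0} \varphi^{-1}(V\polym).
\]
Substituting the identification of the preceding display into the second summand, and using that the orthogonal direct sum is symmetric in its two factors, yields exactly the asserted decomposition. I do not expect a serious obstacle in this argument: it is essentially a matter of unwinding the definition of $\Omega$ through the quotient map. The only genuinely substantive ingredient is \autoref{lem:decomp_of_images} itself, and the two subtleties to flag are that the descent to $V^{\oplus}$ requires $\varphi$ to be in $\PPU(b)^-$, while the decomposition lemma must be applied to $\varphi^{-1}$ rather than to $\varphi$.
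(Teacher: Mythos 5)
Your proposal is correct and follows essentially the same route as the paper: both identify $\pi^{-1}(\Omega(\varphi)) = \varphi^{-1}(V\polym)$ by unwinding the definition of $\Omega$ through the quotient map, and both then invoke \autoref{lem:decomp_of_images} for $\varphi^{-1}$ to obtain the orthogonal decomposition. Your write-up is in fact slightly more careful than the paper's, which leaves the descent of $\varphi$ to $V^{\oplus}$ and the application of the lemma to $\varphi^{-1}$ (rather than $\varphi$) implicit.
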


\begin{proof}
For $v \in V\laur$ we have the equivalences
\[
\pi^{-1}(\Omega(\varphi)) \Leftrightarrow \varphi(v) \in V\polym \Leftrightarrow v \in \varphi^{-1}(V\polym).
\]
On the other hand, by \autoref{lem:decomp_of_images}, we have
\[
V\laur = \varphi^{-1}t V\polyp \oplus_{\tilde{b}_0} \varphi^{-1}V\polym,
\]
thus proving the lemma.
\end{proof}

\begin{prop} \label{prop:omega_is_inj}
$\Omega$ is injective.
\end{prop}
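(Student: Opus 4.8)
The plan is to convert the equality of the two kernels in $V^{\oplus}$ into an equality of their full preimages in $V\laur$, and then to show that the paraunitary map measuring the discrepancy between $\varphi$ and $\psi$ must be the identity, because it lands in $\PPU(b)^+ \cap \PPU(b)^-$, which is trivial by the argument in \autoref{prop:PPU+_is_cone}.

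First I would record that, for any $\varphi \in \PPU(b)^-$, one has $\pi^{-1}(\Omega(\varphi)) = \varphi^{-1}(V\polym)$: indeed $v \in V\laur$ lies over $\ker\varphi$ exactly when $\varphi v \in V\polym$. Since $\varphi(V\polym) \subseteq V\polym$ by the definition of the negative cone, this preimage contains $V\polym$, so passing to the quotient $V^{\oplus}$ loses no information. Consequently the hypothesis $\Omega(\varphi) = \Omega(\psi)$ is equivalent to the single equality $\varphi^{-1}(V\polym) = \psi^{-1}(V\polym)$ of $k\polym$-submodules of $V\laur$.

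The key step is then to consider $g := \varphi\psi^{-1} \in \PPU(b)$. Applying the bijection $\varphi$ to $\psi^{-1}(V\polym) = \varphi^{-1}(V\polym)$ gives $g(V\polym) = V\polym$, and applying $g^{-1}$ to this shows $g^{-1}(V\polym) = V\polym$ as well. Now a paraunitary map sending $V\polym$ into itself necessarily lies in $\PPU(b)^-$: reading off its matrix, the image of each constant basis vector $t^0 e_j \in V\polym$ is a column with entries in $k\polym$. Hence $g \in \PPU(b)^-$ and, from $g^{-1} \in \PPU(b)^-$ together with $(\PPU(b)^-)^{-1} = \PPU(b)^+$, also $g \in \PPU(b)^+$. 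By the same reasoning as in \autoref{prop:PPU+_is_cone}, $\PPU(b)^+ \cap \PPU(b)^- = \{1\}$, so $g = 1$ and $\varphi = \psi$.

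I expect the only genuinely delicate point to be the bookkeeping that a paraunitary map preserving $V\polym$ is a negative element while the symmetric statement about its inverse forces it to be positive; this is exactly the structure of the two cones set up in \autoref{prop:PPU+_is_cone}. I note that \autoref{lem:kernel_image_trick} offers an alternative, more symmetric route: it identifies $\varphi^{-1}(tV\polyp)$ as the $\tilde{b}_0$-orthogonal complement of $\pi^{-1}(\Omega(\varphi))$, so equal kernels would also force $\varphi^{-1}(tV\polyp) = \psi^{-1}(tV\polyp)$. The extra work there would be to check that $\tilde{b}_0$ is nondegenerate on $\varphi^{-1}(V\polym)$ (which follows from $\varphi$ preserving $\tilde{b}_0$ and the nondegeneracy of $\tilde{b}_0$ on $V\polym$) so that the orthogonal complement is unique. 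Since the direct argument above sidesteps this, I would present it as the main line and keep \autoref{lem:kernel_image_trick} in reserve.
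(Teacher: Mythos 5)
Your proof is correct, and it reaches the paper's endgame by a more elementary route. Both arguments terminate identically: the discrepancy element $g=\varphi\psi^{-1}$ is trapped in the intersection of the two cones, which is trivial by the computation in \autoref{prop:PPU+_is_cone}. The difference is how one gets there. The paper translates $\Omega(\varphi_1)=\Omega(\varphi_2)$ into $\varphi_1^{-1}(V\polym)=\varphi_2^{-1}(V\polym)$ and then invokes \autoref{lem:kernel_image_trick} to pass to the $\tilde{b}_0$-orthogonal complements, obtaining $\varphi_1^{-1}(tV\polyp)=\varphi_2^{-1}(tV\polyp)$ and hence $g^{\pm 1}(V\polyp)=V\polyp$, i.e.\ $g^{\pm 1}\in\PPU(b)^+$. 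You skip the orthogonal-complement detour entirely: from $g^{\pm 1}(V\polym)=V\polym$ you read off directly that $g^{\pm 1}\in M_n(k\polym)$, so $g\in\PPU(b)^-\cap\bigl(\PPU(b)^-\bigr)^{-1}=\{1\}$. This makes the injectivity proof independent of $\tilde{b}_0$ and of \autoref{lem:kernel_image_trick} (which, as you observe, also tacitly uses that $tV\polyp$ is the \emph{full} $\tilde{b}_0$-orthogonal complement of $V\polym$, the mirror image of the inclusion checked in Step II), and the cost is nil, since the identification $\pi^{-1}(\Omega(\varphi))=\varphi^{-1}(V\polym)$ is already the first line of the proof of that lemma. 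Your justification that a paraunitary map stabilizing $V\polym$ has matrix in $M_n(k\polym)$ --- the columns are the images of the constant basis vectors $t^0e_j$ --- is exactly the point that needed saying, and it is right.
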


\begin{proof}
Let $\Omega(\varphi_1) = \Omega(\varphi_2)$. By \autoref{lem:kernel_image_trick}, this is equivalent to $\varphi_1^{-1}V\polyp = \varphi_2^{-1}tV\polyp$ resp. $\varphi_2 \varphi_1^{-1}tV\polyp = tV\polyp$ resp. $\varphi_1 \varphi_2^{-1}V\polyp = V\polyp$.

Therefore, both $\varphi_2 \varphi_1^{-1}$ as $(\varphi_2 \varphi_1^{-1})^{-1}$ lie in $\PPU(b)^+$.  By \autoref{prop:PPU+_is_cone} this is only possible when $(\varphi_2 \varphi_1^{-1})^{-1} = 1$ resp. $\varphi_1 = \varphi_2$.
\end{proof}

We can finally give the desired proof of \autoref{thm:omega_is_iso}:
\begin{proof}
By \autoref{prop:omega_is_sur} and \autoref{prop:omega_is_inj}, we have that $\Omega$ is a bijection. It remains to show that $\varphi_1$ right-divides $\varphi_2$ whenever $\Omega(\varphi_1) \subseteq \Omega(\varphi_2)$:

By \autoref{prop:omega_is_sur}, we have $\varphi_1(\Omega(\varphi_2)) = \Omega(\psi)$ for some $\psi \in \PPU(b)^-$.

Therefore
\[
\Omega(\psi \varphi_1) = \varphi_1^{-1}(\Omega(\psi)) = \varphi_1^{-1}(\varphi_1(\Omega(\varphi_2))) = \Omega(\varphi_2)
\]
because $\varphi_1$ is surjective. \autoref{prop:omega_is_inj} now implies that $\varphi_2 = \psi \varphi_1$.
\end{proof}

\subsection*{Step V: Going over to \texorpdfstring{$\PPU(b)$}{PPU(b)}}~

We can now reformulate \autoref{thm:omega_is_iso} as follows:

\begin{cor}
Mapping $\varphi$ to $\varphi^{-1}V\polym$ defines an isomorphism of lattices between
\begin{itemize}
\item $\PPU(b)^-$ with the right-divisibility order, and
\item finitely generated $k\polym$-submodules $U \subseteq V\laur$ containing $V\polym$, ordered by inclusion.
\end{itemize}
\end{cor}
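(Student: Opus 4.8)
The plan is to realize the asserted map as a composite of two order isomorphisms: the isomorphism $\Omega$ from \autoref{thm:omega_is_iso}, followed by the correspondence-theorem bijection $\pi^{-1}$ between $k\polym$-submodules of $V^{\oplus}$ and $k\polym$-submodules of $V\laur$ lying above $V\polym$. Since an order isomorphism between lattices is automatically a lattice isomorphism, this will suffice.

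First I would unwind the map appearing in the statement. For $\varphi \in \PPU(b)^-$ the set-theoretic preimage $\varphi^{-1}(V\polym)$ coincides, because $\varphi$ is invertible over $k\laur$, with the image of $V\polym$ under the inverse automorphism, i.e. with the submodule $\varphi^{-1}V\polym$. This submodule was in effect already computed inside the proof of \autoref{lem:kernel_image_trick}: one has the chain of equivalences $v \in \varphi^{-1}(V\polym) \Leftrightarrow \varphi(v) \in V\polym \Leftrightarrow \pi(v) \in \ker\varphi = \Omega(\varphi)$, so that
\[
\varphi^{-1}V\polym = \pi^{-1}(\Omega(\varphi)).
\]
Thus the map $\varphi \mapsto \varphi^{-1}V\polym$ is literally $\pi^{-1}\circ\Omega$.

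Next I would invoke the correspondence theorem for the quotient $\pi: V\laur \to V^{\oplus} = V\laur/V\polym$ of $k\polym$-modules: the assignment $W \mapsto \pi^{-1}(W)$ is an inclusion-preserving bijection, with inclusion-preserving inverse $U \mapsto \pi(U)$, from all $k\polym$-submodules of $V^{\oplus}$ onto all $k\polym$-submodules of $V\laur$ containing $V\polym$. It then remains only to match the finiteness conditions on the two sides. Because $V\polym \cong (k\polym)^n$ is finitely generated, a submodule $U \supseteq V\polym$ is finitely generated if and only if $U/V\polym$ is; hence $\pi^{-1}$ restricts to a bijection between $\Sub(V^{\oplus})$ and the finitely generated $k\polym$-submodules of $V\laur$ containing $V\polym$, and this restriction is still an order isomorphism.

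Finally, composing the two order isomorphisms $\Omega$ and $\pi^{-1}$ yields an order isomorphism $\varphi \mapsto \varphi^{-1}V\polym$ from $\PPU(b)^-$, under right-divisibility, onto the stated poset of submodules. An order isomorphism of posets transports all existing meets and joins to meets and joins; since $\PPU(b)^-$ is a lattice by \autoref{thm:omega_is_iso} and the target is therefore a lattice as its order-isomorphic image, the bijection is automatically a lattice isomorphism. I expect the only delicate point to be the matching of the finite-generation conditions across $\pi$; everything else is a direct transport of \autoref{thm:omega_is_iso} through the correspondence theorem.
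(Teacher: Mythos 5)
Your proof is correct and is precisely the reformulation the paper intends (the paper states this corollary without proof as an immediate consequence of \autoref{thm:omega_is_iso}): identifying $\varphi^{-1}V\polym$ with $\pi^{-1}(\Omega(\varphi))$ and transporting the order isomorphism through the correspondence theorem. Your explicit check that finite generation matches across the quotient by the finitely generated module $V\polym$ is the one detail the paper leaves implicit, and you handle it correctly.
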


We can now deduce the following lemma which immediately implies \autoref{thm:PPU_is_lattice}:

\begin{lem}
Mapping $\varphi$ to $\varphi^{-1}V\polym$ establishes an isomorphism of posets between:
\begin{itemize}
\item $\PPU(b)$, with the right-divisibility order given by $\PPU(b)^-$,
\item finitely generated $k\polym$-submodules $U \subseteq V\laur$ containing $t^m V\polym$ for some $m \in \mathbb{Z}$, ordered by inclusion.
\end{itemize}
\end{lem}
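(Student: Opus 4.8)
The plan is to bootstrap from the preceding corollary, which already settles the case of the negative cone $\PPU(b)^-$ together with the finitely generated $k\polym$-submodules containing $V\polym = t^0V\polym$, by using a central element to shift between that situation and the full group. Write $\Phi(\varphi):=\varphi^{-1}V\polym$ for the map in question, and set $\zeta:=t^{-1}\cdot\mathrm{id}_{V\laur}$. One checks at once that $\zeta\in\PPU(b)^-$ is central (being a scalar it commutes with every $k\laur$-linear map, and $\zeta'=t\cdot\mathrm{id}=\zeta^{-1}$, $\zeta(1)=\mathrm{id}$), and that $\zeta$ intertwines $\Phi$ with the shift $U\mapsto t^mU$ of submodules:
\[
\Phi(\zeta^m\varphi)=(\zeta^m\varphi)^{-1}V\polym=\varphi^{-1}(t^mV\polym)=t^m\,\Phi(\varphi),
\]
valid for all $m\in\mathbb{Z}$ since $\varphi^{-1}$ is $k\laur$-linear and $\zeta^{-m}V\polym=t^mV\polym$. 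The key structural fact is that every $\varphi\in\PPU(b)$ decomposes as $\varphi=\zeta^{-m}\psi$ with $m\geq 0$ and $\psi:=\zeta^m\varphi\in\PPU(b)^-$: multiplying the finite Laurent expansion $\varphi=\sum_i t^i\varphi_i$ by $t^{-m}$ for $m\geq\max\{i:\varphi_i\neq 0\}$ pushes all exponents to be nonpositive, while $\psi(1)=\mathrm{id}$ is preserved. Thus $\PPU(b)=\bigcup_{m\geq 0}\zeta^{-m}\PPU(b)^-$, which corresponds under $\Phi$ to the shifted union $\bigcup_{m\geq 0}t^{-m}\{\text{f.g. submodules}\supseteq V\polym\}$.

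With this dictionary, well-definedness and surjectivity are immediate. For $\varphi=\zeta^{-m}\psi$ the intertwining relation gives $\Phi(\varphi)=t^{-m}\Phi(\psi)$, and since $\Phi(\psi)$ is a finitely generated $k\polym$-submodule containing $V\polym$ by the preceding corollary, $\Phi(\varphi)$ is finitely generated and contains $t^{-m}V\polym$; hence $\Phi$ lands in the asserted target. Conversely, given a finitely generated $U\subseteq V\laur$ with $U\supseteq t^{\ell}V\polym$, the shifted module $t^{-\ell}U$ is finitely generated and contains $V\polym$, so by the surjectivity established in the preceding corollary (ultimately \autoref{prop:omega_is_sur}) there is $\psi\in\PPU(b)^-$ with $\Phi(\psi)=t^{-\ell}U$; then $\varphi:=\zeta^{\ell}\psi\in\PPU(b)$ satisfies $\Phi(\varphi)=t^{\ell}\Phi(\psi)=U$.

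Injectivity and the two-sided order statement both follow from a single formal computation valid for arbitrary $\varphi_1,\varphi_2\in\PPU(b)$: applying the bijection $\varphi_2$ to the inclusion $\varphi_1^{-1}V\polym\subseteq\varphi_2^{-1}V\polym$ yields
\[
\Phi(\varphi_1)\subseteq\Phi(\varphi_2)\iff\varphi_2\varphi_1^{-1}V\polym\subseteq V\polym\iff\varphi_2\varphi_1^{-1}\in\PPU(b)^-,
\]
where the last step is the defining characterization of $\PPU(b)^-$ as the maps sending $V\polym$ into itself. The final condition is precisely that $\varphi_1$ right-divides $\varphi_2$, i.e.\ $\varphi_1\leq\varphi_2$ in the right-divisibility order, so this equivalence simultaneously gives order-preservation and order-reflection. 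Taking both inclusions, $\Phi(\varphi_1)=\Phi(\varphi_2)$ forces $\varphi_2\varphi_1^{-1}$ and its inverse into $\PPU(b)^-$, whence $\varphi_2\varphi_1^{-1}=1$ by the cone property recorded in \autoref{prop:PPU+_is_cone}; thus $\Phi$ is injective.

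I expect the only genuine obstacle to be bookkeeping rather than mathematics: one must verify that the central shift matches the submodule shift on the nose and that the target set is captured exactly. In particular, one should confirm that ``contains $t^mV\polym$ for some $m\in\mathbb{Z}$'' is stable under the shift and equals $\bigcup_{m\geq 0}t^{-m}\{\text{submodules}\supseteq V\polym\}$, using the elementary containment $t^{\ell}V\polym\subseteq t^{\ell'}V\polym$ for $\ell\leq\ell'$ (so that the condition for some $\ell\in\mathbb{Z}$ is equivalent to the condition for some $\ell\leq 0$). Every other ingredient reduces cleanly to the negative-cone case already proved, so no new estimate is required.
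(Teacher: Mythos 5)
Your proof is correct and follows essentially the same route as the paper: reduce to the negative-cone corollary by shifting with the central element $t^{-1}$, and read off injectivity and the order-embedding from the equivalence $\varphi_1^{-1}V\polym \subseteq \varphi_2^{-1}V\polym \Leftrightarrow \varphi_2\varphi_1^{-1}V\polym \subseteq V\polym \Leftrightarrow \varphi_2\varphi_1^{-1}\in\PPU(b)^-$ together with \autoref{prop:PPU+_is_cone}. If anything, your write-up is more careful than the paper's (whose final sentence, ``exactly when $V\polym \subseteq V\polym$'', is garbled), since you also verify explicitly that the image of an arbitrary $\varphi\in\PPU(b)$ lands in the stated target set.
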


\begin{proof}
Let $U \leq V\laur$ contain $t^m V \polym$. Then $t^{-m}U$ contains $V\polym$. Therefore, $t^{-m}U = \varphi^{-1} V\polym$ for a $\varphi \in \PPU(b)^-$, showing that $U = (\varphi t^{-m})^{-1} V\polym$, so the given map is surjective.

Injectivity follows from exactly the same argument as in the proof of \autoref{prop:omega_is_inj}.

In order to show that $\varphi \mapsto \varphi^{-1}V \polym$ is an order-embedding it needs to be shown that $\varphi \in \PPU(b)^-$ iff $\varphi^{-1} V\polym \subseteq V\polym$. But this is the case exactly when $V \polym \subseteq V \polym$ and this happens if and only $\varphi \in \PPU(b)^-$.
\end{proof}

\section{\texorpdfstring{$\PPU(b)$}{PPU(b)} is a structure group}

The reader is reminded of the fact that $\PPU(b)^-$ is the negative cone of $\PPU(b)$. Note that the right-invariant order induced on $PPU(b)^-$ is \emph{opposite} to the right-divisibility order on $\PPU(b)^-$ we used in the preceding section!

In what follows, $\PPU(b)$ carries its right-invariant order defined by the negative cone $\PPU(b)^-$ resp. the positive cone $\PPU(b)^+$.

Before tackling our proof that $\PPU(b)$ can be identified with the structure group $G(X(b))$ we collect a few lemmata:

The structure of the interval $\left[t^{-1},1 \right]$ is given as follows:

\begin{lem} \label{lem:Delta_1_consists_of}
$\left[t^{-1},1 \right]$ consists exactly of the elements $p_U$, as defined on page \pageref{definition_of_pv}.
\end{lem}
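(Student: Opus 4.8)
The plan is to unwind the interval condition into explicit constraints on the Laurent expansion of $\varphi$, and then to extract from paraunitarity that the top coefficient is an orthogonal projection. First I would translate the two inequalities $t^{-1} \le \varphi \le 1$ using the order from \autoref{prop:PPU+_is_cone}. The upper bound $\varphi \le 1$ is simply $\varphi \in \PPU(b)^-$. For the lower bound, note that $t^{-1}\cdot\mathrm{id}$ and $t\cdot\mathrm{id}$ both lie in $\PPU(b)$ (they are mutually inverse and fixed by $\varepsilon_1$), so $t^{-1} \le \varphi$ is equivalent to $t\varphi \in \PPU(b)^+$. Writing $\varphi = \sum_{i\le 0} t^i\varphi_i \in \PPU(b)^-$, the requirement $t\varphi \in M_n(k\polyp)$ forces $\varphi_i = 0$ for $i < -1$. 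Hence every $\varphi \in [t^{-1},1]$ has the shape $\varphi = t^{-1}A + B$ with $A = \varphi_{-1}$ and $B = \varphi_0$ in $\mathrm{End}_k V$.

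Next I would impose the two defining properties of $\PPU(b)$. Purity, $\varepsilon_1(\varphi) = \varphi(1) = A + B = 1$, gives $B = 1 - A$. Paraunitarity, $\varphi^{\prime}\varphi = 1$, is checked coefficientwise: using $\varphi^{\prime} = tA^{\prime} + B^{\prime}$ from \eqref{eq:adjoint_b_tilde2} and expanding $(tA^{\prime}+B^{\prime})(t^{-1}A+B)$, the coefficients of $t^{1}$, $t^{0}$, $t^{-1}$ yield $A^{\prime}B = 0$, $A^{\prime}A + B^{\prime}B = 1$ and $B^{\prime}A = 0$. Substituting $B = 1 - A$ into the outer two equations gives $A^{\prime} = A^{\prime}A$ and $A = A^{\prime}A$, whence $A^{\prime} = A$ and $A = A^2$; that is, $A$ is a self-adjoint idempotent, i.e. an orthogonal projection. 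Setting $W := \mathrm{im}(A)$ we get $A = \pi_W$, $B = 1 - \pi_W = \pi_{W^{\ast}}$, and with $U := W^{\ast}$ (so $W = U^{\ast}$) this reads $\varphi = t^{-1}\pi_{U^{\ast}} + \pi_U = p_U$. The middle equation $A^{\prime}A + B^{\prime}B = 1$ is then automatic from idempotency.

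For the reverse inclusion I would simply verify that each $p_U$ lies in $[t^{-1},1]$. We already know $p_U \in \PPU(b)^-$ from \autoref{lem:props_of_pv}, so $p_U \le 1$. For $t^{-1} \le p_U$ it suffices to observe that $t\,p_U = \pi_{U^{\ast}} + t\pi_U \in M_n(k\polyp)$ and that $t\,p_U \in \PPU(b)$, being the product of $t\cdot\mathrm{id} \in \PPU(b)$ with $p_U$; hence $t\,p_U \in \PPU(b)^+$, which is exactly the lower bound.

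I expect the main obstacle to be bookkeeping rather than conceptual: one must keep straight which inverse appears in the right-invariant order (hence the \textbf{Caution} already issued in the text) and must confirm that the scalar endomorphisms $t^{\pm 1}\cdot\mathrm{id}$ genuinely belong to $\PPU(b)$, so that the lower-bound condition can be rephrased as a degree condition. Once the shape $\varphi = t^{-1}A + B$ is secured, the algebra forcing $A$ to be an orthogonal projection is short and robust.
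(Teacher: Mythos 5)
Your proof is correct, but it takes a genuinely different route from the paper. The paper deduces the lemma from the machinery already built in Section 3: by \autoref{thm:omega_is_iso} the map $\Omega$ is a poset isomorphism from $\PPU(b)^-$ (with right-divisibility) onto $\Sub(V^{\oplus})$, so the interval $\left[t^{-1},1\right]$ corresponds to the interval of submodules between $\Omega(p_V)=0$ and $\Omega(p_0)=tV$; these are exactly the $tU$ for $k$-subspaces $U\subseteq V$, each of which is $\Omega(p_{U^{\ast}})$ by \autoref{lem:props_of_pv}. You instead work entirely locally: you unwind the order from \autoref{prop:PPU+_is_cone} (correctly noting $t^{\pm 1}\cdot\mathrm{id}\in\PPU(b)$, so that $t^{-1}\leq\varphi$ becomes the degree condition $t\varphi\in\PPU(b)^+$), truncate the Laurent expansion to $\varphi=t^{-1}A+B$, and extract from purity and $\varphi^{\prime}\varphi=1$ that $A$ is a $b$-self-adjoint idempotent, hence $A=\pi_W$ and $\varphi=p_{W^{\ast}}$. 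The one step you leave implicit --- that a $b$-self-adjoint idempotent is the orthogonal projection onto its image --- is standard here: idempotency gives $V=\mathrm{im}(A)\oplus\ker(A)$, self-adjointness gives $\ker(A)\subseteq(\mathrm{im}\,A)^{\ast}$, and a dimension count (using non-degeneracy of $b$) forces equality. Your approach is independent of \autoref{thm:omega_is_iso} and yields an explicit coefficientwise normal form for the elements of the interval, which is arguably more illuminating about \emph{why} they are the $p_U$; the paper's approach is shorter given the established isomorphism and keeps the interval embedded in the lattice picture of $\Sub(V^{\oplus})$ that drives the rest of Section 4. (Incidentally, your computation also shows that the paper's closing line has the two extreme cases swapped: one has $p_0=t^{-1}$ and $p_V=1$.)
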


\begin{proof}
\autoref{lem:props_of_pv} and \autoref{thm:omega_is_iso} show that $\Omega$ establishes a bijection between the $k\polym$-submodules of $V^{\oplus}$ that lie between $tV = \Omega(p_0)$ and $0 = \Omega(p_V)$.

Each of those submodules must be of the form $tU$ for some $k$-subspace $U \subseteq V$, and each such subspace arises as $\Omega(p_{U^{\ast}})$.

Noting that $p_0 = 1$ and $p_V = t^{-1}$ proves the lemma.
\end{proof}

The following lemma shows that some pairs of elements $p_U \in \PPU(b)^-$ behave pretty well under multiplication, i.e.:

\begin{lem} \label{lem:mult_of_p_u}
If $U \top W$ are subspaces of $V$, we have $p_U p_W = p_{U \sqcap W}$.

If $U,W \subseteq V$ are arbitrary subspaces then $p_U p_W \in \left[t^{-1},1 \right]$ holds if and only iff $U \top W$.
\end{lem}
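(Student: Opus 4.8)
The plan is to handle the two assertions separately. For the identity $p_U p_W = p_{U \sqcap W}$ under the hypothesis $U \top W$, I would proceed by direct expansion. Writing $p_U = t^{-1}\pi_{U^{\ast}} + \pi_U$ gives
\[
p_U p_W = t^{-2}\,\pi_{U^{\ast}}\pi_{W^{\ast}} + t^{-1}\!\left( \pi_{U^{\ast}}\pi_W + \pi_U \pi_{W^{\ast}} \right) + \pi_U \pi_W .
\]
Now $U \top W$ is equivalent to $U^{\ast}\bot W^{\ast}$, which by the definition of $\bot$ means $W^{\ast} \subseteq U$ and, symmetrically, $U^{\ast}\subseteq W$. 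By \autoref{lem:identities_for_projections} c) the relation $U^{\ast}\bot W^{\ast}$ makes the leading term vanish, $\pi_{U^{\ast}}\pi_{W^{\ast}} = 0$. For the two middle terms I would record the elementary nesting identity $\pi_A \pi_B = \pi_A = \pi_B \pi_A$ whenever $A \subseteq B$ (here $\pi_B$ fixes $A$ pointwise, while $\pi_A$ annihilates $B^{\ast}\subseteq A^{\ast}$); applied to $U^{\ast}\subseteq W$ and $W^{\ast}\subseteq U$ it yields $\pi_{U^{\ast}}\pi_W = \pi_{U^{\ast}}$ and $\pi_U \pi_{W^{\ast}} = \pi_{W^{\ast}}$.

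Since $U^{\ast}\bot W^{\ast}$, the sum $U^{\ast}+W^{\ast}$ is orthogonal, so \autoref{lem:identities_for_projections} a) gives $\pi_{U^{\ast}}+\pi_{W^{\ast}} = \pi_{U^{\ast}\oplus W^{\ast}} = \pi_{(U\cap W)^{\ast}}$, while \autoref{lem:identities_for_projections} b) gives $\pi_U \pi_W = \pi_{U \sqcap W}$. Collecting coefficients, $p_U p_W = t^{-1}\pi_{(U\cap W)^{\ast}} + \pi_{U\sqcap W} = p_{U\sqcap W}$. The equivalence in the second assertion then comes almost for free: if $U \top W$ then $p_U p_W = p_{U\sqcap W}\in \left[t^{-1},1\right]$ by what was just shown together with \autoref{lem:Delta_1_consists_of}. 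Conversely, if $p_U p_W \in \left[t^{-1},1\right]$, then by \autoref{lem:Delta_1_consists_of} it equals some $p_X$, whose expansion involves only $t^{-1}$ and $t^0$; comparing with the display above forces the $t^{-2}$-coefficient $\pi_{U^{\ast}}\pi_{W^{\ast}}$ to vanish, which by \autoref{lem:identities_for_projections} c) is exactly $U^{\ast}\bot W^{\ast}$, i.e. $U \top W$.

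I do not anticipate a deep obstacle; the argument is essentially bookkeeping, and the whole content is condensed into the single observation that $p_U p_W$ lands in the interval precisely when its $t^{-2}$-coefficient $\pi_{U^{\ast}}\pi_{W^{\ast}}$ is zero. The only points requiring care are that the orthogonal projections are self-adjoint and idempotent but do not commute in general, so the two $t^{-1}$-terms must be simplified by separate applications of the nesting identity in opposite multiplication orders, and that one must move cleanly between the relation $\top$, the orthogonality $U^{\ast}\bot W^{\ast}$, the inclusions $U^{\ast}\subseteq W$ and $W^{\ast}\subseteq U$, and the De Morgan identity $(U\cap W)^{\ast}=U^{\ast}+W^{\ast}$.
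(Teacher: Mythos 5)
Your proof is correct and follows essentially the same route as the paper: expand the product, kill the $t^{-2}$-term via $\pi_{U^{\ast}}\pi_{W^{\ast}}=0$, collect the $t^{-1}$-coefficient into $\pi_{(U\cap W)^{\ast}}$, and for the converse compare Laurent coefficients against the form of the elements of $\left[t^{-1},1\right]$ given by \autoref{lem:Delta_1_consists_of}. The only (welcome) difference is that you make explicit the nesting identity $\pi_A\pi_B=\pi_A$ for $A\subseteq B$ used on the middle terms, which the paper subsumes under a blanket citation of \autoref{lem:identities_for_projections}.
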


\begin{proof}
For the first part, we just calculate
\begin{align*}
p_U p_W & = (t^{-1}\pi_{U^{\ast}} + \pi_U)(t^{-1}\pi_{W^{\ast}} + \pi_W) \\
& = t^{-2}\pi_{U^{\ast}}\pi_{W ^{\ast}} + t^{-1}(\pi_{U^{\ast}}\pi_W + \pi_U \pi_{W^{\ast}}) + \pi_U \pi_W \\
& = t^{-2}\cdot 0 + t^{-1}(\pi_{U^{\ast}} + \pi_{W^{\ast}}) + \pi_{U \sqcap W} \quad \textnormal{(\autoref{lem:identities_for_projections})}\\
& = t^{-1}(\pi_{U^{\ast} \oplus W^{\ast}}) + \pi_{U \sqcap W} \\
& = t^{-1} \pi_{(U \sqcap W)^{\ast}} + \pi_{U \sqcap W} = p_{U \sqcap W}.
\end{align*}
Looking at the third line of this calculation and taking \autoref{lem:Delta_1_consists_of} into account, we infer that $p_U p_W \in \left[t^{-1},1 \right]$ implies $\pi_{U^{\ast}}\pi_{W^{\ast}} = 0$. By \autoref{lem:identities_for_projections}, this is exactly the case when $U^{\ast} \bot W^{\ast}$ or, equivalently, $U \top W$.

The other implication follows immediately from the first part of the lemma.
\end{proof}

\begin{lem} \label{lem:Xb_is_embedded}
The map
\begin{align*}
\iota: X(b) & \to \left[t^{-1},1 \right] \\
U & \mapsto p_U 
\end{align*}
is an isomorphism of lattices (with\emph{out} respect to the OML structures).
\end{lem}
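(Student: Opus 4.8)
The goal is to show that $\iota: X(b) \to [t^{-1},1]$, $U \mapsto p_U$, is a lattice isomorphism (as plain lattices, ignoring the orthocomplementations). By \autoref{lem:Delta_1_consists_of} we already know that $\iota$ is a bijection, since every element of $[t^{-1},1]$ is of the form $p_U$ for a unique subspace $U$ (uniqueness follows because $\Omega(p_U) = \{tv : v \in U^{\ast}\}$ determines $U^{\ast}$, hence $U$). So the only thing left to prove is that $\iota$ and its inverse are order-preserving; for a bijection between lattices this is equivalent to being a lattice homomorphism. The cleanest route is therefore to translate the order on $[t^{-1},1]$ through the isomorphism $\Omega$ of \autoref{thm:omega_is_iso} and compare it with the inclusion order on subspaces of $V$.

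First I would pin down how the order on $[t^{-1},1] \subseteq \PPU(b)$ relates to the order I understand best. The ambient order on $\PPU(b)$ is the one with negative cone $\PPU(b)^-$, so for $p_U, p_W \in \PPU(b)^-$ we have $p_U \leq p_W$ iff $p_U p_W^{-1} \in \PPU(b)^-$, i.e. iff $p_W$ right-divides $p_U$ in the monoid $\PPU(b)^-$. By \autoref{thm:omega_is_iso}, right-divisibility is turned into inclusion of kernels by $\Omega$: $p_W$ right-divides $p_U$ exactly when $\Omega(p_W) \subseteq \Omega(p_U)$. Using \autoref{lem:props_of_pv}(ii), $\Omega(p_U) = \{tv : v \in U^{\ast}\}$, so this reads
\[
p_U \leq p_W \quad \Longleftrightarrow \quad W^{\ast} \subseteq U^{\ast} \quad \Longleftrightarrow \quad U \subseteq W,
\]
the last step because $\ast$ is order-reversing and involutive. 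This shows at once that $\iota$ is an order-isomorphism onto its image, hence a bijective order-isomorphism onto $[t^{-1},1]$, and a bijective order-isomorphism between lattices is automatically a lattice isomorphism.

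Since the statement asserts a lattice isomorphism, I would also make the join and meet formulas explicit, as a sanity check and because they are what the interval structure really means. The meet in $X(b)$ is intersection and the join is sum of subspaces; under $\iota$ these should become the lattice operations in the interval $[t^{-1},1]$, which by \autoref{lem:Delta_1_consists_of} is a genuine interval in the right $\ell$-group $\PPU(b)$. Concretely one expects $\iota(U \cap W) = p_U \wedge p_W$ and $\iota(U + W) = p_U \vee p_W$; these follow formally from the order-isomorphism just established, but \autoref{lem:mult_of_p_u} gives independent corroboration in the compatible case $U \top W$, where $p_U p_W = p_{U \sqcap W}$ coincides with the meet.

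**The main obstacle.** There is essentially no hard computation here: once \autoref{thm:omega_is_iso} and \autoref{lem:props_of_pv} are granted, the proof is a two-line chain of equivalences. The only subtle point to get right is the direction of the order. One must be careful that \autoref{thm:omega_is_iso} is stated for the \emph{right-divisibility} order on $\PPU(b)^-$, whereas the order on $\PPU(b)$ used for the interval $[t^{-1},1]$ is the one with negative cone $\PPU(b)^-$, and these two are \emph{opposite} to each other (as flagged in the remark opening \autoref{lem:Delta_1_consists_of}). The double order-reversal — once from divisibility-versus-cone order, and once from the order-reversing involution $\ast$ relating $U$ to $U^{\ast}$ in the formula $\Omega(p_U) = tU^{\ast}$ — conspires to give the correct covariant isomorphism $U \subseteq W \Leftrightarrow p_U \leq p_W$, but it would be easy to drop one of the two flips and land on the opposite (anti-)isomorphism. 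Verifying that these two reversals cancel is the one place where care is genuinely required.
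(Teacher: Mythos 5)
Your proof is correct, and it reaches the conclusion by a route that differs from the paper's in one half. The paper splits the argument in two: it proves that $\iota$ is order-preserving \emph{multiplicatively}, by observing that $W \subseteq U$ gives $W = U \sqcap (U^{\ast}+W)$, hence $p_W = p_Z p_U$ with $Z = U^{\ast}+W$ via \autoref{lem:mult_of_p_u}, so that $p_W \leq p_U$ because the left cofactor $p_Z$ lies in the negative cone; it then builds an explicit order-preserving inverse $p \mapsto \left(\gamma(\Omega(p))\right)^{\ast}$. You instead obtain both directions at once from the single chain of equivalences
\[
p_U \leq p_W \iff p_W \text{ right-divides } p_U \iff \Omega(p_W) \subseteq \Omega(p_U) \iff W^{\ast} \subseteq U^{\ast} \iff U \subseteq W,
\]
which leans entirely on \autoref{thm:omega_is_iso} and \autoref{lem:props_of_pv} and bypasses \autoref{lem:mult_of_p_u} altogether. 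Your version is arguably more uniform and makes the two compensating order reversals (cone order versus right-divisibility, and the involution $\ast$ hidden in $\Omega(p_U) = tU^{\ast}$) explicit, which is exactly the place where a sign error would creep in; the paper's version has the advantage of exhibiting the concrete witness $p_Z$ for the divisibility relation, which is in the spirit of the later factorization results. One small remark: your direction check is also a useful corrective, since with the definition $p_U = t^{-1}\pi_{U^{\ast}} + \pi_U$ one has $p_0 = t^{-1}$ and $p_V = 1$ (the paper's proof of \autoref{lem:Delta_1_consists_of} states these the other way around, which is evidently a typo), and your equivalence $p_U \leq p_W \Leftrightarrow U \subseteq W$ is consistent with the corrected identification of the endpoints.
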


\begin{proof}
From \autoref{lem:Delta_1_consists_of} the surjectivity follows. Furthermore, $\iota$ is order-preserving: if $W \subseteq U \subseteq V$, we have $W = U \sqcap (U^{\ast} + W)$. Setting $Z := U^{\ast} + W$, we therefore get by \autoref{lem:mult_of_p_u} that $p_W = p_{Z \sqcap U} = p_Z p_U$, implying $p_W \leq p_U$.

We will now construct an inverse to $\iota$:

Using the obvious isomorphism $\gamma: \left[ 0, tV \right] \overset{\sim}{\longrightarrow} X(b)$ taking $tU$ to $U$ we get that
\begin{align*}
\left( \gamma(\Omega(p_U)) \right)^{\ast} & = \left( \gamma(tU^{\ast}) \right)^{\ast} \quad \textnormal{(\autoref{lem:props_of_pv})} \\
& = U^{\ast \ast} = U.
\end{align*}
Thus we have constructed an order-preserving inverse to $\iota$.
\end{proof}

We now show the second main result of this article:

\begin{thm}
$\PPU(b)$ is a structure group for the OML $X(b)$. 
$X(b)$ can be identified with the interval $[t^{-1},1]$ in the order given by the negative cone $\PPU(b)^-$, and the orthocomplementation on this interval is given by $g \mapsto t^{-1}g^{-1}$.
\end{thm}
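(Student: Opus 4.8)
The plan is to apply Rump's \autoref{thm:identifying_structure_groups} to the right $\ell$-group $G = \PPU(b)$ (which is lattice-ordered by \autoref{thm:PPU_is_lattice}) with the candidate Garside element $\Delta := t^{-1}$, by which I mean the central scalar $t^{-1}\cdot\mathrm{id}$, regarded as an element of $\PPU(b)^-$. Once I verify that $t^{-1}$ is a \emph{singular strong order unit}, the theorem immediately yields that $[t^{-1},1]$ is an OML whose orthocomplementation is $g \mapsto g^{-1}\Delta = t^{-1}g^{-1}$ (using centrality of $t^{-1}$), and that $\PPU(b)$ is a structure group for this OML. It then remains only to identify $[t^{-1},1]$ with $X(b)$ as OMLs, for which \autoref{lem:Xb_is_embedded} already supplies a lattice isomorphism $\iota\colon U \mapsto p_U$.

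The strong-order-unit conditions are the easy part. Normality of $t^{-1}$, i.e.\ that left multiplication by $t^{-1}$ is a lattice automorphism, is immediate: since $t^{-1}$ is central, left multiplication by $t^{-1}$ coincides with right multiplication by $t^{-1}$, which is a lattice automorphism by right invariance of the order. For the order-unit property I would argue that, given $g \in \PPU(b)$, the inverse $g^{-1}$ is a matrix over $k\laur$ with some lowest occurring power $t^{a}$; choosing any $m \le a$ makes $t^{-m}g^{-1}$ a matrix over $k\polyp$ lying in $\PPU(b)$, hence in $\PPU(b)^+$, which is exactly the assertion $g \le (t^{-1})^m$.

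The main obstacle is singularity: for $x,y \in \PPU(b)^-$ with $t^{-1} \le xy$ I must show $yx = x \wedge y$. The key preliminary observation is that both $x$ and $y$ lie in the interval $[t^{-1},1]$. Indeed $x \le 1$ gives $xy \le y$, so $t^{-1} \le xy \le y \le 1$ forces $y \in [t^{-1},1]$; and right-multiplying $t^{-1} \le xy$ by $y^{-1}$ gives $t^{-1}y^{-1} \le x$, where $t^{-1}y^{-1} \ge t^{-1}$ because $y^{-1} \ge 1$ and $t^{-1}$ is central, so $x \in [t^{-1},1]$ as well. By \autoref{lem:Delta_1_consists_of} I may therefore write $x = p_U$ and $y = p_W$. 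Now $xy = p_Up_W \in [t^{-1},1]$, so \autoref{lem:mult_of_p_u} yields $U \top W$ together with $p_Up_W = p_{U \sqcap W}$. Since $\top$ is symmetric, the same lemma applied to $W,U$ gives $yx = p_Wp_U = p_{W \sqcap U} = p_{U \sqcap W} = xy$. Finally, as $\iota$ is a lattice isomorphism (\autoref{lem:Xb_is_embedded}) and $U \sqcap W = U \cap W$, the meet computed in $\PPU(b)$ satisfies $x \wedge y = p_U \wedge p_W = p_{U \cap W} = p_{U \sqcap W}$, which equals $yx$; this is precisely the singularity condition (and in fact delivers $xy = yx = x \wedge y$).

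With singularity established, \autoref{thm:identifying_structure_groups} applies and makes $[t^{-1},1]$ an OML with complement $g^{\ast} = t^{-1}g^{-1}$. To finish I check that $\iota$ respects orthocomplementation: using $p_U^{-1} = p_U' = t\pi_{U^{\ast}} + \pi_U$ one computes $t^{-1}p_U^{-1} = t^{-1}\pi_U + \pi_{U^{\ast}} = p_{U^{\ast}}$, so $\iota(U)^{\ast} = p_U^{\ast} = p_{U^{\ast}} = \iota(U^{\ast})$. Hence $\iota$ is an isomorphism of OMLs between $X(b)$ and $[t^{-1},1]$, and transporting the structure-group property along $\iota$ shows that $\PPU(b)$ is a structure group for $X(b)$ with the stated orthocomplementation. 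I expect the verification that $x,y$ both lie in $[t^{-1},1]$ to be the genuinely load-bearing step, since it is exactly what reduces the singularity axiom to the already-computed multiplication rule for the $p_U$.
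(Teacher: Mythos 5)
Your proposal is correct and follows essentially the same route as the paper: verify that $t^{-1}$ is a singular strong order unit (normality via centrality, the order-unit property via clearing denominators in $g^{-1}$, singularity via \autoref{lem:mult_of_p_u} and \autoref{lem:Xb_is_embedded}), then invoke \autoref{thm:identifying_structure_groups} and identify $[t^{-1},1]$ with $X(b)$ through $U \mapsto p_U$. If anything, you are slightly more careful than the paper in the singularity step, where you explicitly justify that $t^{-1} \leq xy$ with $x,y \in \PPU(b)^-$ forces both $x$ and $y$ into the interval $[t^{-1},1]$ before writing them as $p_U, p_W$ --- a point the paper's proof takes for granted.
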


\begin{proof}
First, we show that $t^{-1}$ is a singular strong order unit in $\PPU(b)$:

Left-multiplication by $t^{-1}$ is order-preserving because $t^{-1}$ is central in $\PPU(b)$, therefore left- and right-multiplication by $t^{-1}$ coincide, the latter being order-preserving by the mere fact that $\PPU(b)$ is right-ordered. Therefore, $t^{-1}$ is normal.

Let $p_U,p_W \in \left[ t^{-1},1 \right]$ fulfil $p_U p_W \in \left[ t^{-1},1 \right]$. By \autoref{lem:mult_of_p_u}, this implies $U \top W$.

But then \autoref{lem:mult_of_p_u} implies that $p_W p_U = p_{W \sqcap U} = p_W \wedge p_U$ the latter equality following from \autoref{lem:Xb_is_embedded}. Hence $t^{-1}$ is singular.

It remains to show that each $p \in \PPU(b)$ is majorized by some $(t^{-1})^m$:

We have $p^{\prime}p = 1$. Writing $p^{\prime}$ as matrices, it becomes clear that $t^k p^{\prime} \in \PPU(b)^+$ for some large enough $k$. Therefore, $(t^k p^{\prime})p = t^k$, implying that $p \leq t^k = (t^{-1})^{-k}$.

We have thus shown that $t^{-1}$ is a singular strong order unit of $\PPU(b)$.

By \autoref{thm:identifying_structure_groups}, we conclude that $\PPU(b)$, with its given lattice order, can be identified as the structure group of the OML $\left[t^{-1},1 \right]$ whose orthocomplementation is given by $p_U \mapsto (p_U)^{\ast} := p_U^{-1}t^{-1}$.

By \autoref{lem:mult_of_p_u}, we have $p_Up_{U^{\ast}} = t^{-1}$. Thus $(p_U)^{\ast} = p_{U^{\ast}}$, i.e. $\left[ t^{-1},1 \right]$ and $X(b)$ are isomorphic as OMLs.
\end{proof}

\begin{rema}
The reader may have recognized that all proofs and constructions can be put into the bigger framework of \emph{hermitean} anisotropic forms. We give a brief outline:

If $\sigma: k \to k,\, x \mapsto x^{\sigma}$ is an involutive field automorphism and $V$ is a $k$-vector space we call $b: V \times V \to V$ \emph{hermitean} with respect to $\sigma$, if $b$ is additive in both arguments, antilinear in the first and linear in the second argument, i.e. $b(r\cdot v,s\cdot w) = r^{\sigma}s \cdot b(v,w)$ for all $r,s \in k$, $v,w \in V$.

The definition of anisotropy is the same as in the bilinear case.

One proceeds to define $V\laur$ as before and define $\tilde{b}$ on $V\laur \times V\laur$ by Equation \eqref{eq:def_of_b_tilde}. The only difference is that $\tilde{b}$ is that $\tilde{b}$ is hermitean with respect to the involution on $k\laur$ given by:
\begin{align*}
\ast: k \laur & \to k \laur \\
\sum_{i = - \infty}^{\infty} t^i x_i & = \sum_{i = - \infty}^{\infty} t^{-i} x_i^{\sigma}.
\end{align*}
The definition of the paraunitary resp. pure paraunitary group continue as in the symmetric case, and so do all results up to this point.
\end{rema}

\section{Conclusions}

\subsection{\texorpdfstring{$\PPU(b)$}{PPU(b)} as a braid group for \texorpdfstring{$U(b)$}{U(b)}}

Recall that we got $\PPU(b)$ as the kernel of the specialization map $\varepsilon_1:\PU(b) \to U(b)$.

Imitating the proof of equation \eqref{eq:specializing_b} one sees that there is exactly one other specialization map $\PU(b) \to U(b)$ given by setting $t$ to $-1$, and the image of a the generator $p_U \in \PPU(b)$ is given by
\[
p_U(-1) = -\pi_{U^{\ast}} + \pi_U = 1 - 2\pi_U
\]
which is the reflection of $V$ (with respect to $b$) at the subspace $U^{\ast}$. By the Cartan-Dieudonné-theorem (see \cite[Chapter 1, §7]{lam_quadratic}) these reflections generate $U(b)$.

A look at \autoref{lem:mult_of_p_u} makes clear that the atoms $p_{kv}$ ($v \in V \setminus \{ 0 \}$) which correspond to the hyperplane reflections in the Cartan-Dieudonné-theorem are already a set of generators for $\PPU(b)$.

We can furthermore calculate:
\[
(p_U - t^{-1})(p_U - 1) = (-t^{-1}\pi_U + \pi_U)(t^{-1}\pi_{U^{\ast}} - \pi_{U^{\ast}}) = - (t^{-1} - 1)^2 \pi_U \pi_{U^{\ast}} = 0
\]
which is - up to some affine substitutions - the quadratic relation fulfilled by the generators of the Hecke algebra of an Artin-Tits group (see \cite[Chapter 1, Section 2]{mathas}).

The existence of the Garside element $t^{-1}$, the generation by preimages of reflections in $U(b)$ together with the Hecke relations for the generators, all of these suggest to see $\PPU(b)$ as a braid group for the unitary group!

We use this opportunity to pose the following
\begin{ques*}
Describe the quotient of the group algebra $k\left[ \PPU(b) \right]$ by the Hecke relations $(p_U - t^{-1})(p_U - 1) = 0$.
\end{ques*}

The description of $PPU(b)$ as a matrix group preserving a unitary bilinear form suggests a further analogy to the braid groups, whose representing matrices under the Burau representation act by unitary transformations with respect to a very similar bilinear form (see \cite{Squier_Burau}).

\subsection{Factorization of paraunitary matrices}

Using further results of Rump, we get a very nice practical application:

One of the main applications of paraunitary matrices is in signal processing where $\PU(b)^+$, $b:\mathbb{R}^n \times \mathbb{R}^n \to \mathbb{R}^n$ being the canonical inner product, is considered under the name of \emph{$n \times n$-FIR lossless matrices}. See \cite[Chapter 14]{Paraunitary2} as a reference.

Rump proved (\cite[Propositions 3,4]{rump_goml}) that in the structure monoid $S(X)$ of an arbitrary OML each element $x$ has a unique factorization as
\[
x = x_1 x_2 \ldots x_n
\]
with $x_i \in X$ such that $x_i^{\ast} \vee x_{i+1} = 1$ ($1 \leq i < n$). This means it is not possible to right-contract the factorization by factoring any $x_i = y_1y_2$ ($y_1,y_2 \in X$) such that $y_2x_{i+1}\in X$, therefore enlarging $x_{i+1}$.

Existence and uniqueness for a factorization that is not left-contractable is shown in a more general context in \cite[Proposition 12]{Rump-Geometric-Garside}.

Using \autoref{cor:semidirect_product} we therefore conclude that for any $p \in \PU(b)^-$ there exist unique subspaces $U_i$ ($1\leq i \leq n$) and a unique $h \in U(b)$ such that $U_i^{\ast} + U_{i+1} = V$ ($1 \leq i < n$) and
\[
p = p_{U_1} p_{U_2} \ldots p_{U_n} h.
\]
This factorization is given by factoring $p$ into a pure paraunitary and a paraunitary matrix and then applying Rump's normal form theorem to the paraunitary factor.

Clearly, there also exist normal forms with $h$ as leftmost factor and/or the opposite condition $U_i + U_{i+1}^{\ast} = V$.

For the classical paraunitary matrices - i.e. $PU(b)^+$, $b$ being the canonical inner product on $\mathbb{R}^n$ - this is a known result (\cite[14.4.2, 4]{Paraunitary2}).

\bibliographystyle{halpha}

\end{document}